\newtheorem{theo}{Theorem}[section]
\newtheorem{lemm}[theo]{Lemma}
\newtheorem{defi}[theo]{Definition}
\newtheorem{prop}[theo]{Proposition}
\newtheorem{rema}[theo]{Remark}
\numberwithin{equation}{section}
\begin{document}
\title{The continuous dependence for the Navier-Stokes equations in $\dot{B}^{\frac{d}{p}-1}_{p,r}$}
\author{
Weikui $\mbox{Ye}^1$,\footnote{email: 904817751@qq.com}\quad
Wei $\mbox{Luo}^1$,\footnote{email: luowei23@mail2.sysu.edu.cn}
\quad and \quad
 Zhaoyang $\mbox{Yin}^{1,2}$\footnote{email: mcsyzy@mail.sysu.edu.cn}\quad\\
 $^1\mbox{Department}$ of Mathematics,
Sun Yat-sen University,\\ Guangzhou, 510275, China\\
$^2\mbox{Faculty}$ of Information Technology,\\ Macau University of Science and Technology, Macau, China}
\date{}
\maketitle
\begin{abstract}
 In this paper, we mainly investigate the Cauchy problem for the incompressible Navier-Stokes equations in homogeneous Besov spaces $\dot{B}^{\frac{d}{p}-1}_{p,r}$ with $1\leq p<\infty,\ 1\leq r\leq \infty, \ d\geq 2$. 
  Firstly, we prove the local existence of the solution and give a lower bound of the lifespan $T$ of the solution.  The lifespan depends on the Littlewood-Paley decomposition of the initial data, that is $\dot{\Delta}_j u_0$. Secondly, if the initial data $u^n_0\rightarrow u_0$ in $\dot{B}^{\frac{d}{p}-1}_{p,r}$, then the corresponding lifespan $T_n\rightarrow T$. Thirdly, we prove that the data-to-solutions map is continuous in $\dot{B}^{\frac{d}{p}-1}_{p,r}$. Therefore, the Cauchy problem of the Navier-Stokes equations is locally well-posed in the critical Besov spaces in the Hadamard sense. Moreover, we also obtain well-posedness and weak-strong uniqueness results in $L^{\infty}L^2\cap L^{2}\dot{H}^1$.
\end{abstract}

\noindent \textit{Keywords}: The Navier-Stokes equations, Lifespan, Continuous dependence, Well-posedness,  Weak-strong uniqueness.\\
Mathematics Subject Classification: 35Q53, 35B30, 35B44, 35D10, 76W05.

\tableofcontents
\section{Introduction}
In this paper, we mainly investigate the Cauchy problem of the incompressible Navier-Stokes ($NS$) equations:
\begin{equation}\label{nssp00}
\left\{\begin{array}{lll}
u_t-\Delta u+u\nabla u+\nabla P =0,\\
div~u=0\\
u|_{t=0}=u_0,
\end{array}\right.
\end{equation}
where the unknowns are the vector fields $u=(u_1,u_2,...,u_d)$ and the scalar function $P$. Here, $u$ is the velocity, respectively, while $P$ denotes the pressure.

In the seminal paper \cite{ns14}, J. Leray proved the global existence of finite energy weak solutions to the ($NS$) equations. Yet the uniqueness and regularity of such weak solutions are big open questions in
the field of mathematical fluid mechanics except the case when the initial data has special
structure. For instance, with axi-symmetric initial velocity and without swirl component,
O. A. Ladyzhenskaya \cite{ns13} and independently M. R. Ukhovskii and V. I. Yudovich \cite{ns21} proved the existence of
weak solution along with the uniqueness and regularity of such solution to the ($NS$) equations.
Another popular topic is the partial regularity of the weak solution and the typical work was established by Caffarelli, Kohn and Nirenberg\cite{Caffarelli}.
They proved that the singular set $S$ to the suitable weak solution satisfies that $P^1(S)=0$\footnote{$P^1$ is the one-dimensional Hausdorff measure defined by parabolic cylinders}. Lin\cite{Lin-F-H} proposed a simplified proof of the Caffarelli-Kohn-Nirenberg theorem.
The investigation on the weak solutions to \eqref{nssp00} is based on the following energy inequality:
$$\int u^2 dx+\int^T_0\int |\nabla u|^2 dx \leq \int u^2_0 dx, $$
for any $T>0$ and $u_0\in L^2$. The energy inequality implies the existence and  partial regularity of the weak solutions. However,
T. Buckmaster and V. Vical\cite{Buckmaster} proved that the weak solutions with bounded kinetic energy are not unique. This result means that the regularity of solutions is important to study the uniqueness.

In order to study the regularity, it is always transform the system \eqref{nssp00} into the following equations
\begin{equation}\label{nssp0}
\left\{\begin{array}{lll}
u_t-\Delta u =-\mathbb{P}(u\nabla u), \\
u|_{t=0}=u_0,
\end{array}\right.
\end{equation}
where $\mathbb{P}=I+\nabla (-\Delta)^{-1}div$ is the Leray project operator and the initial data satisfies $div~u_0=0$. Using the heat kernel and Duhamel's principle, one can rewrite the $NS$ equations into
$$u=e^{t\Delta}u_0-\int^t_0e^{(t-s)\Delta }\mathbb{P}div~(u\otimes u)ds.$$
The function $u$ satisfying the above equations is called a mild solution. The investigation for the mild solution is based on Banach's contraction principle. The key point is to look for the estimates for the bilinear operator
$$B(u,v)=\int^t_0e^{(t-s)\Delta }\mathbb{P}div~(u\otimes v)ds,$$
in some suitable Banach spaces $X$. If the norm $\|e^{t\Delta } u_0\|_X$ is small and $\|B(u,v)\|_X\leq C\|u\|_X\|v\|_X$, then one can prove the existence and uniqueness by virtue of the Banach contraction principle.

 Let's review some results about mild solutions with $d=3$. In the homogeneous Sobolev spaces, H. Fujita and T. Kato \cite{ns9} proved the global well-posedness of ($NS$) with small data $u_0\in \dot{H}^{\frac{1}{2}}$. In the Lebesgue space, T. Kato \cite{Kato} proved the global well-posedness of ($NS$) with small data $u_0\in L^3$.   This result was generalized by
M. Cannone, Y. Meyer and F. Planchon \cite{ns3} with small initial data $u_0\in \dot{B}^{\frac{3}{p}-1}_{p,\infty}$ with $p \in (3,\infty)$. The end-point result in this direction is due to H. Koch and D. Tataru \cite{ns11}, where they proved the global well-posedness of the ($NS$) equations with small data $u_0\in BMO^{-1}$.

On the other hand, J. Bourgain and N. Pavlovi\'{c} \cite{ns2} proved that the ($NS$) equation is ill-posed with initial data in $\dot{B}^{-1}_{\infty,\infty}$.
P. Germain in \cite{nsgerman} proved an instability result for $u_0\in \dot{B}^{-1}_{\infty,r}$ with $r > 2$ by showing that the data-to-solutions map is not in the class $C^2$. Recently, B. Wang \cite{wbx} proved that the ill-posedness for $u_0\in\dot{B}^{-1}_{\infty,r}$ with $1\leq r \leq 2$. Recently, the ill-posedness for the ($NS$) equation in $\dot{B}^{-1}_{\infty,r}$ ($1\leq r\leq\infty$) was studied. In \cite{illweizhi}, A. Cheskidov and R. Shvydkoy constructed a special initial data belonging to $B^{\frac{d}{p}-1}_{p,\infty}$ such that the solution is discontinuous at $t=0$ in the norm $\dot{B}^{-1}_{\infty,\infty}$. Since $B^{\frac{d}{p}-1}_{p,\infty}(p<d)\hookrightarrow\dot{B}^{\frac{d}{p}-1}_{p,\infty}\hookrightarrow\dot{B}^{-1}_{\infty,\infty}\hookrightarrow B^{-1}_{\infty,\infty}$, A. Cheskidov and R. Shvydkoy's result implied that \eqref{nssp00} is ill-posed in $\dot{B}^{\frac{d}{p}-1}_{p,\infty}$.

The majority of papers to study well-posedness or ill-posedness for the \eqref{nssp00} equations is based on small conditions.
For large initial data,  J. Y. Chemin, I. Gallagher and M. Paicu\cite{Chemin-Annals}
proved that the global well-posedness
requires that the initial data $u_0$ has a slow space variable (See also J. Y. Chemin and I. Gallagher\cite{ns5}, J. Y. Chemin and P. Zhang\cite{ns7}.).
For arbitrary initial data, Z. Guo, J. Li and Z. Yin in \cite{yin} proved the uniform continuous dependence and the inviscid limit of the \eqref{nssp00} equations with $u_0\in {B}^s_{p,r}$, $s>\frac{d}{p}+1$ or $s=\frac{d}{p}+1,r=1$.

In \cite{wbx},  B. Wang proposed that the largest homogeneous Besov space on the general initial data for which \eqref{nssp00} is well-posed (existence, uniqueness and continuous dependence) is still unknown, especially for the continuous dependence. In this paper, our aim
is to show that the continuous dependence for the mild solutions in the critical Besov spaces with arbitrary initial data.
Note that
$$\dot{B}^{\frac{d}{p}-1}_{p,r}(1\leq p,r<\infty)\hookrightarrow \dot{B}^{\frac{d}{p}-1}_{p,\infty}(1\leq p<\infty)\hookrightarrow BMO^{-1}\hookrightarrow\dot{B}^{-1}_{\infty,\infty}.$$
We are going to establish the local well-posedness of
 the Cauchy problem (\ref{nssp00}) in $\dot{B}^{\frac{d}{p}-1}_{p,r}$ ($1\leq p,r<\infty$). For $r=\infty$, we will assume that the initial data $u_0$ belongs to a subspace $\bar{B}^{\frac{d}{p}-1}_{p,\infty}$ of $\dot{B}^{\frac{d}{p}-1}_{p,\infty}$ to overcome the problem. This improves  considerably the previous results in \cite{chemin1999}.

For Leray's weak solutions, J. Y. Chemin \cite{chemin2011} proved the continuous dependence for $u_0\in L^2\cap\dot{H}^a\cap\bar{B}^{\frac{d}{p}-1}_{p,\infty}$ with $3<p<\infty$. Recently, T. Barker \cite{B1,B2} proved the continuous dependence for $u_0\in L^2\cap VMO^{-1}\cap\bar{B}^{s}_{p,p}$ with $3<p<\infty$ and $-1+\frac{2}{p}<s<0$. In this paper, we prove the continuous dependence for $u_0\in L^2\cap\dot{B}^{\frac{d}{p}-1}_{p,r}$ with $1\leq p<\infty$ and $1\leq r\leq\infty$ (when $r=\infty$, we also choose $\bar{B}^{\frac{d}{p}-1}_{p,\infty}$), which is different from previous results for more general initial data. In this way, if we give a little regularity on Leray's weak solutions, then the \eqref{nssp00} equations is well-posed.

The main difficulty is that the initial data maybe large and has low regularity. If the initial data $u_0\in B^s_{p,r}$ with $s$ large enough, then one can easily prove that the lifespan $T\approx \frac{1}{\|u_0\|^2_{B^s_{p,r}}}$. However, we consider the problem with $u_0\in \dot{B}^{\frac{d}{p}-1}_{p,r}$, and the lifespan $T$ depends not only on the Besov norm, but also on the Littlewood-Paley decomposition of $u_0$. In this paper, we would like to present a general functional framework to deal with the local existence of the solutions of \eqref{nssp00} in the homogeneous Besov spaces. In fact, we mainly prove that the lifespan $T$ of the \eqref{nssp00} equations can be solved by the following inequality
$$\|e^{t\Delta }u_0\|_{\widetilde{L}^{2}_T(\dot{B}^{\frac{d}{p}}_{p,r})\cap \widetilde{L}^{1}_T(\dot{B}^{\frac{d}{p}+1}_{p,r})}\leq a,$$
where $a$ is a small constant. In order to solve the above inequality, we have to
 use the Littlewood-Paley decomposition of the initial data $u_0=\sum_{j}\dot{\Delta}_j u_0$ and obtain the relationship between $T$ and the index $j$. Then we prove that if the initial data $u^n_0\rightarrow u_0$ in $\dot{B}^{\frac{d}{p}-1}_{p,r}$, then the corresponding lifespan satisfies $T_n\rightarrow T$, which implies that the common lower bound of the lifespan (see the key Lemma \ref{gj} below). Finally, we prove the continuous dependence in homogeneous Besov spaces.

Our paper is organized as follows. In Section 2, we give some useful preliminaries. In Section 3, we prove the local existence and the uniqueness of solutions to (\ref{nssp0}) with the expression of local time being given. In Section 4, we firstly prove that if the initial data $u^n_0$ tend to $u_0$ in $\dot{B}^{\frac{d}{p}-1}_{p,1}$, then their local existence times satisfy $T_n\rightarrow T$, which implies that they have public lower bound of the lifespan $T-\delta$ with $n$ being sufficient large. Then we obtain the continuous dependence in the critical Besov space. Finally, we prove the well-posedness of (\ref{nssp0}) corresponding to Leray's weak solutions if the initial data belongs to $L^2\cap\dot{B}^{\frac{d}{p}-1}_{p,r}$.

Our main result can be stated as follows:\\
\quad\\
\textbf{Well-posedness in Besov spaces:}\\
\begin{theo}\label{theorem}
Let $u_0\in \dot{B}^{\frac{d}{p}-1}_{p,1}(\mathbb{R}^d)$ with $r,p\in [1,\infty)$. Then there exists a positive time $T$ such that the Cauchy problem (\ref{nssp0}) is locally well-posed in $E^p_T$ with
$$E^p_T:= \widetilde{L}^{\infty}([0,T];\dot{B}^{\frac{d}{p}-1}_{p,r}(\mathbb{R}^d))\cap \widetilde{L}^1([0,T];\dot{B}^{\frac{d}{p}+1}_{p,r}(\mathbb{R}^d))$$
in the Hadamard sense. Moreover, we have $u\in C([0,T];\dot{B}^{\frac{d}{p}-1}_{p,r}(\mathbb{R}^d))$. If the initial data is small enough, then the Cauchy (\ref{nssp0}) is globally well-posed.
\end{theo}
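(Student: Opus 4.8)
The plan is to solve the mild formulation $u=e^{t\Delta}u_0-B(u,u)$, with $B(u,v):=\int_0^te^{(t-s)\Delta}\mathbb{P}\,\mathrm{div}\,(u\otimes v)\,ds$, by a contraction argument and then to extract from the fixed point existence, uniqueness, continuity of the data-to-solution map and the time-continuity. The natural working quantity is the auxiliary Chemin--Lerner norm $\|f\|_{Y_T}:=\|f\|_{\widetilde{L}^2_T(\dot{B}^{\frac{d}{p}}_{p,r})}+\|f\|_{\widetilde{L}^1_T(\dot{B}^{\frac{d}{p}+1}_{p,r})}$, which interpolates between the two factors of $E^p_T$. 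Two estimates are needed. The first is the linear bound $\|e^{t\Delta}u_0\|_{\widetilde{L}^\infty_T(\dot{B}^{\frac{d}{p}-1}_{p,r})}\lesssim\|u_0\|_{\dot{B}^{\frac{d}{p}-1}_{p,r}}$ together with the crucial fact that $\|e^{t\Delta}u_0\|_{Y_T}\to0$ as $T\to0$, whereas the $\widetilde{L}^\infty_T(\dot{B}^{\frac{d}{p}-1}_{p,r})$ part does not. This last property is exactly where the Littlewood--Paley decomposition of $u_0$ enters: $\|e^{t\Delta}u_0\|_{Y_T}$ is an $\ell^r$-sum over $j$ of quantities built from $\dot{\Delta}_ju_0$, each tending to $0$ with $T$, so the whole sum does when $r<\infty$ by dominated convergence (when $r=\infty$ this forces one to restrict to the closure $\bar{B}^{\frac{d}{p}-1}_{p,\infty}$ so that the $j$-tail is uniformly small). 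Since $T\mapsto\|e^{t\Delta}u_0\|_{Y_T}$ is nondecreasing with limit $0$ at $T=0^+$, the lifespan is then defined as the largest $T$ with $\|e^{t\Delta}u_0\|_{Y_T}\le a$, $a$ a fixed small constant.

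The second ingredient is the bilinear estimate: using the heat smoothing $\|e^{(t-s)\Delta}\dot{\Delta}_jf\|_{L^p}\lesssim e^{-c2^{2j}(t-s)}\|\dot{\Delta}_jf\|_{L^p}$ and Young's inequality in time, $B$ gains up to two derivatives of smoothing, so that $\|B(u,v)\|_{E^p_T\cap Y_T}\lesssim\|u\otimes v\|_{\widetilde{L}^1_T(\dot{B}^{\frac{d}{p}}_{p,r})}\lesssim\|u\|_{Y_T}\|v\|_{Y_T}$; the product law at the endpoint index $\frac{d}{p}$ (where $\dot{B}^{\frac{d}{p}}_{p,r}$ fails to be an algebra for $r>1$) is handled by a Bony decomposition together with Hölder in time in the $\widetilde{L}^q_T$ framework. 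With $a$ small relative to the bilinear constant, $u\mapsto e^{t\Delta}u_0-B(u,u)$ is a contraction on the ball $\{\,\|u\|_{Y_T}\le 2a\,\}$, and the gain of derivatives shows that the fixed point also lies in $E^p_T$; this gives a solution in $E^p_T$ and uniqueness in the ball. Uniqueness in all of $E^p_T$ follows by a short bootstrap, since on a short enough subinterval any two solutions have small $Y$-norm, forcing their difference to vanish, and this is propagated forward. The continuity $u\in C([0,T];\dot{B}^{\frac{d}{p}-1}_{p,r})$ follows from the continuity of $t\mapsto e^{t\Delta}u_0$ (again the $\ell^r$ dominated-convergence argument, $\bar{B}$ when $r=\infty$) and from $\mathrm{div}(u\otimes u)\in\widetilde{L}^1_T(\dot{B}^{\frac{d}{p}-1}_{p,r})$, whose heat evolution lies in $C([0,T];\dot{B}^{\frac{d}{p}-1}_{p,r})$. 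When $\|u_0\|_{\dot{B}^{\frac{d}{p}-1}_{p,r}}$ is small, $\|e^{t\Delta}u_0\|_{Y_\infty}\lesssim\|u_0\|_{\dot{B}^{\frac{d}{p}-1}_{p,r}}\le a$ already, so the same fixed point runs globally.

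The part I expect to be the main obstacle is the continuity of the data-to-solution map in the \emph{critical} norm, which forces one first to control the lifespan. Because $T$ is determined not by $\|u_0\|_{\dot{B}^{\frac{d}{p}-1}_{p,r}}$ alone but by the profile $\{\dot{\Delta}_ju_0\}_j$ through $\|e^{t\Delta}u_0\|_{Y_T}\le a$, one must show (Lemma~\ref{gj}) that $u_0^n\to u_0$ in $\dot{B}^{\frac{d}{p}-1}_{p,r}$ forces $\|e^{t\Delta}u_0^n\|_{Y_T}\le a$ to persist on intervals only slightly shorter than the one for $u_0$ — using $\|e^{t\Delta}(u_0^n-u_0)\|_{Y_\infty}\lesssim\|u_0^n-u_0\|_{\dot{B}^{\frac{d}{p}-1}_{p,r}}\to0$ — whence $T_n\to T$ and all $u^n$ live on a common interval $[0,T-\delta]$ with uniformly bounded norms and with $\|u^n\|_{Y_\tau}$ uniformly small for small $\tau$. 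Granting this, $w^n:=u^n-u$ solves $w^n=e^{t\Delta}(u_0^n-u_0)-B(w^n,u^n)-B(u,w^n)$; on a first short common subinterval the bilinear terms are absorbed, giving $\|w^n\|_{E^p_\tau}\lesssim\|u_0^n-u_0\|_{\dot{B}^{\frac{d}{p}-1}_{p,r}}\to0$, and one bootstraps this estimate across $[0,T-\delta]$ using the convergence $u^n(\cdot)\to u(\cdot)$ in $\dot{B}^{\frac{d}{p}-1}_{p,r}$ at the endpoints of the subintervals. I expect the two heaviest technical points to be proving Lemma~\ref{gj} itself — the uniform control of the heat flow at the critical regularity along the approximating sequence — and the careful bookkeeping at the endpoint index $\frac{d}{p}$ of the Chemin--Lerner norms and the Besov product laws underlying the bilinear estimate when $r>1$.
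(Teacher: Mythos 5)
Your proposal is correct and follows essentially the same architecture as the paper: the contraction is run in the auxiliary space $Y_T=\widetilde{L}^{2}_T(\dot{B}^{\frac{d}{p}}_{p,r})\cap \widetilde{L}^{1}_T(\dot{B}^{\frac{d}{p}+1}_{p,r})$ (the paper's space $A$), the lifespan is the largest $T$ with $\|e^{t\Delta}u_0\|_{Y_T}\le a$, detected through the Littlewood--Paley tail of $u_0$, and continuous dependence rests on first stabilizing the lifespan along $u_0^n\to u_0$ and then absorbing the bilinear terms; the paper's Picard iteration in Steps 1--3 is just the unfolded version of your fixed point.

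Two remarks. First, your proof of the lifespan-stability lemma is genuinely different from, and cleaner than, the paper's: you get $T_n\to T$ softly from $\|e^{t\Delta}(u_0^n-u_0)\|_{Y_\infty}\lesssim\|u_0^n-u_0\|_{\dot{B}^{\frac{d}{p}-1}_{p,r}}$ and the monotone continuity of $T\mapsto\|e^{t\Delta}u_0\|_{Y_T}$, whereas the paper (Lemma \ref{gj}) runs a rather involved construction of truncation indices $j_0^n\to j_0$ and recovers $T_n\to T$ through the explicit formulas $T_0,T_1$. Both yield the needed common interval $[0,T-\delta]$; yours avoids the combinatorics entirely. Second, a point of imprecision you should repair: the displayed bilinear bound $\|u\otimes v\|_{\widetilde{L}^{1}_T(\dot{B}^{\frac{d}{p}}_{p,r})}\lesssim\|u\|_{Y_T}\|v\|_{Y_T}$ cannot hold as stated, because the norm $\|u\|_{\widetilde{L}^{4}_T(\dot{B}^{\frac{d}{p}-\frac12}_{p,r})}$ that the Bony/H\"older splitting forces on the low-regularity factor is \emph{not} an interpolate of the two components of $Y_T$ alone --- it requires the $\widetilde{L}^{\infty}_T(\dot{B}^{\frac{d}{p}-1}_{p,r})$ norm as well. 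The correct estimate is of the form $\lesssim \|u\|^{1/2}_{\widetilde{L}^{\infty}_T(\dot{B}^{\frac{d}{p}-1}_{p,r})}\|u\|^{1/2}_{Y_T}\|v\|_{Y_T}$ (as in \eqref{lsp3}), so the contraction constant carries a factor $E_0^{1/2}a^{1/2}$ and $a$ must be chosen small \emph{depending on} $\|u_0\|_{\dot{B}^{\frac{d}{p}-1}_{p,r}}$, not merely small relative to a universal bilinear constant; this is exactly the role of condition \eqref{lsp2}. Once that dependence is made explicit your scheme closes, and your bootstrap for uniqueness in all of $E^p_T$ (rather than only in the ball) is in fact more careful than the paper's Step 4.
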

\begin{defi}
We denote a closed subspace $\bar{B}^{\frac{d}{p}-1}_{p,\infty}(\mathbb{R}^d)$ of $\dot{B}^{\frac{d}{p}-1}_{p,\infty}$ as follows:
$$\bar{B}^{\frac{d}{p}-1}_{p,\infty}(\mathbb{R}^d):=\{f\in \dot{B}^{\frac{d}{p}-1}_{p,\infty}(\mathbb{R}^d)|\quad\lim_{|j|\rightarrow \infty}2^{\frac{d}{p}-1}\|\dot{\Delta}_jf\|_{L^p}=0\}.$$
\end{defi}
\begin{theo}\label{theorem2}
Let $u_0\in\bar{B}^{\frac{d}{p}-1}_{p,\infty}(\mathbb{R}^d)$ with $p\in [1,\infty)$. Then there exists a positive time $T$ such that the Cauchy problem (\ref{nssp0}) is locally well-posed in $$E_T:=\widetilde{L}^{\infty}([0,T];\dot{B}^{\frac{d}{p}-1}_{p,r}(\mathbb{R}^d))\cap \widetilde{L}^1([0,T];\dot{B}^{\frac{d}{p}+1}_{p,r}(\mathbb{R}^d))$$ in the Hadamard sense.
\end{theo}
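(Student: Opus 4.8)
The plan is to run the same contraction scheme that establishes Theorem \ref{theorem}, and to isolate the single place where the hypothesis $r<\infty$ was used, namely the fact that
\[
\|e^{t\Delta}u_0\|_{\widetilde{L}^2_T(\dot{B}^{\frac{d}{p}}_{p,r})\cap\widetilde{L}^1_T(\dot{B}^{\frac{d}{p}+1}_{p,r})}\longrightarrow 0\qquad\text{as }T\to 0.
\]
For $r<\infty$ this is dominated convergence in the $\ell^r$-sum over the Littlewood--Paley indices; at the endpoint $r=\infty$ it fails on all of $\dot{B}^{\frac{d}{p}-1}_{p,\infty}$, but I claim it holds precisely on the subspace $\bar{B}^{\frac{d}{p}-1}_{p,\infty}$, and that this is the only new ingredient needed.

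\emph{Step 1 (the linear smallness on $\bar{B}^{\frac{d}{p}-1}_{p,\infty}$).} Put $c_j:=2^{(\frac{d}{p}-1)j}\|\dot{\Delta}_ju_0\|_{L^p}$, so $(c_j)_j\in\ell^\infty$ and, by the definition of $\bar{B}^{\frac{d}{p}-1}_{p,\infty}$, $c_j\to 0$ as $|j|\to\infty$. The standard heat-semigroup estimate $\|e^{t\Delta}\dot{\Delta}_ju_0\|_{L^p}\le Ce^{-ct2^{2j}}\|\dot{\Delta}_ju_0\|_{L^p}$ gives, for every $j$,
\[
2^{\frac{d}{p}j}\|e^{t\Delta}\dot{\Delta}_ju_0\|_{L^2_TL^p}+2^{(\frac{d}{p}+1)j}\|e^{t\Delta}\dot{\Delta}_ju_0\|_{L^1_TL^p}\le C\,c_j\Bigl(\bigl(1-e^{-cT2^{2j}}\bigr)^{1/2}+\bigl(1-e^{-cT2^{2j}}\bigr)\Bigr).
\]
Given $\ep>0$, choose $N$ with $c_j\le\ep$ for $|j|>N$; since $1-e^{-cT2^{2j}}\le 1$, the supremum over $\{|j|>N\}$ of the right-hand side is $\le 2C\ep$ uniformly in $T$. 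For the finitely many indices $|j|\le N$ each factor $1-e^{-cT2^{2j}}$ tends to $0$ as $T\to 0$, so there is $T_\ep>0$ with the supremum over $\{|j|\le N\}$ at most $\ep$ for $T\le T_\ep$. Taking the supremum over all $j$ gives the claimed convergence.

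\emph{Step 2 (existence, uniqueness, continuous dependence).} With Step 1 in hand the argument of Theorem \ref{theorem} carries over with only notational changes: choose $T$ so small that the linear norm above lies below the smallness threshold $a$ from the bilinear estimate $\|B(u,v)\|_{E_T}\le C\|u\|_{\widetilde{L}^2_T(\dot{B}^{\frac{d}{p}}_{p,\infty})}\|v\|_{\widetilde{L}^2_T(\dot{B}^{\frac{d}{p}}_{p,\infty})}$ (which holds for every $r$, in particular $r=\infty$), and apply Banach's fixed point theorem in a small ball of $E_T$; uniqueness in $E_T$ follows by comparing two solutions on a subinterval where all relevant norms are small and then continuing. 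For the Hadamard part, if $u_0^n\to u_0$ in $\dot{B}^{\frac{d}{p}-1}_{p,\infty}$ then $\sup_j|c_j^n-c_j|\le\|u_0^n-u_0\|_{\dot{B}^{\frac{d}{p}-1}_{p,\infty}}\to 0$, so the tails of $(c_j^n)_j$ are small \emph{uniformly in} $n$; re-running Step 1 then yields $\|e^{t\Delta}u_0^n\|_{\widetilde{L}^2_T(\dot{B}^{\frac{d}{p}}_{p,\infty})\cap\widetilde{L}^1_T(\dot{B}^{\frac{d}{p}+1}_{p,\infty})}\to 0$ as $T\to 0$ uniformly in $n$, which is exactly the uniform lower bound on the lifespans $T_n$ needed to invoke Lemma \ref{gj}. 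On a common time interval the continuous-dependence estimate is then the one from Section 4, with this uniform-in-$n$ linear smallness replacing the dominated convergence used there for $r<\infty$: decompose $u^n-u$ into the linear part $e^{t\Delta}(u_0^n-u_0)$, which tends to $0$ in $E_T$, and a bilinear remainder bounded by $C\bigl(\|u^n\|_{E_T}+\|u\|_{E_T}\bigr)\|u^n-u\|_{E_T}$, and absorb the latter using the smallness of the norms.

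\emph{Main obstacle.} The one genuine difficulty is the endpoint $r=\infty$: smooth (or Schwartz) functions are not dense in $\dot{B}^{\frac{d}{p}-1}_{p,\infty}$, so the linear smallness cannot be reduced to an approximation argument, as it could for $r<\infty$. The closed subspace $\bar{B}^{\frac{d}{p}-1}_{p,\infty}$ is designed precisely so that the decay $2^{(\frac{d}{p}-1)j}\|\dot{\Delta}_ju_0\|_{L^p}\to 0$ allows the frequency splitting of Step 1 — a finite ``bad'' block absorbed by the time-decay of the heat flow, plus a uniformly small tail — and the stability of this splitting under $\dot{B}^{\frac{d}{p}-1}_{p,\infty}$-convergence of the data is what lets the lifespan lemma, and hence the continuous dependence, survive at $r=\infty$.
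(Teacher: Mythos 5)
Your proposal is correct and follows essentially the same route as the paper: the decay $2^{(\frac{d}{p}-1)j}\|\dot{\Delta}_ju_0\|_{L^p}\to 0$ defining $\bar{B}^{\frac{d}{p}-1}_{p,\infty}$ is used to split the frequencies into a uniformly small tail plus a finite block killed by the heat flow as $T\to 0$, which restores the smallness $\|e^{t\Delta}u_0\|_{\widetilde{L}^2_T(\dot{B}^{d/p}_{p,\infty})\cap\widetilde{L}^1_T(\dot{B}^{d/p+1}_{p,\infty})}\le a$, after which the contraction, uniqueness, lifespan-stability and continuous-dependence arguments of Theorem \ref{theorem} are rerun verbatim. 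This is exactly the paper's proof of Theorem \ref{theorem2}.
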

\begin{rema}
In fact, one can find the existence and uniqueness results as for Theorem \ref{theorem} and Theorem \ref{theorem2} in \cite{chemin1999}, our main contribution is to prove the continuous dependence such that
\begin{align}\label{cd}
\|u-u^n\|_{L^{\infty}(\dot{B}^{\frac{d}{p}-1}_{p,r})}+\|u-u^n\|_{\widetilde{L}^{1}(\dot{B}^{\frac{d}{p}+1}_{p,r})}\leq C_{u_0}\|u_0-u^n_0\|_{\dot{B}^{\frac{d}{p}-1}_{p,r}},\quad 1\leq p<\infty,\ 1\leq r\leq \infty,
\end{align}
where the lifespan $T$ is independent of $n$.
\end{rema}
\quad\\
\textbf{The continuous dependence for the Leray weak solutions:}\\

The following theorem shows that the Leray weak solutions are continuously dependent on the initial data in a weak topology.
\begin{theo}\label{theorem3}
Let $div~u_0=div~u^n_0=0$, $u_0,u_0^n\in L^2(\mathbb{R}^d)\cap \dot{B}^{\frac{d}{p}-1}_{p,r}(\mathbb{R}^d)$ ($r=\infty$, $u_0\in L^2(\mathbb{R}^d)\cap \bar{B}^{\frac{d}{p}-1}_{p,\infty}(\mathbb{R}^d)$) with $p\in [1,\infty)$. If $u_0^n\rightarrow u_0 $ in $L^2\cap\dot{B}^{\frac{d}{p}-1}_{p,r}(\mathbb{R}^d)$, then there exists a positive time $T$ independent of $n$ such that
\begin{align}\label{L2}
\|u-u^n\|_{L^{\infty}(L^2)}+\|u-u^n\|_{L^{2}(\dot{H}^1)}\leq C\|u_0-u^n_0\|_{L^2},
\end{align}
where $u,u^n$ are the Leray weak solutions corresponding to the initial data $u_0$ and $u_0^n$. This implies the well-posedness of (\ref{nssp0}) for the Leray weak solutions in the Hadamard sense.
\end{theo}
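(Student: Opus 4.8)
The plan is to transfer, on a time interval $[0,T]$ common to all $n$, the comparison of the Leray weak solutions to a comparison of the \emph{strong} solutions furnished by Theorems \ref{theorem}--\ref{theorem2}, and then to close an $L^2$ energy estimate for their difference. Since $u_0,u_0^n\in\dot B^{\frac dp-1}_{p,r}$ (with $\bar B^{\frac dp-1}_{p,\infty}$ when $r=\infty$), Theorems \ref{theorem} and \ref{theorem2} give strong solutions $\bar u\in E^p_{T_\infty}$ and $\bar u^n\in E^p_{T_n}$; because $u_0^n\to u_0$ in $\dot B^{\frac dp-1}_{p,r}$, the lifespan continuity (Lemma \ref{gj}) gives $T_n\to T_\infty$, so fixing any $T<T_\infty$ we have $T<T_n$ for $n$ large, and \eqref{cd} gives $\bar u^n\to\bar u$ in $E^p_T$. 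Because moreover $u_0,u_0^n\in L^2$, the first sub-goal is to show that these strong solutions carry the energy, i.e. $\bar u,\bar u^n\in C([0,T];L^2)\cap L^2([0,T];\dot H^1)$ with the energy \emph{equality}; then each $\bar u^n$ is an admissible Leray solution on $[0,T]$, weak-strong uniqueness forces $u^n=\bar u^n$ and $u=\bar u$ there, and \eqref{L2} reduces to an estimate between two strong solutions.

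For the $L^2$-propagation I would test $\partial_t\bar u^n-\Delta\bar u^n=-\mathbb P(\bar u^n\nabla\bar u^n)$ against $\bar u^n$. The only non-routine points are justifying $\int(\bar u^n\cdot\nabla\bar u^n)\cdot\bar u^n\,dx=0$ and the time-absolute-continuity of $\|\bar u^n(t)\|_{L^2}^2$; both follow from the integrability hidden in $E^p_T$: interpolating $\widetilde L^\infty_T(\dot B^{\frac dp-1}_{p,r})$ with $\widetilde L^1_T(\dot B^{\frac dp+1}_{p,r})$ gives $\bar u^n\in\widetilde L^\rho_T(\dot B^{\frac dp-1+\frac2\rho}_{p,r})$ for every $\rho\in[1,\infty]$, hence, after a Besov embedding (and parabolic smoothing to absorb the mild singularity at $t=0$ when $r>1$), $\bar u^n$ lies in a Ladyzhenskaya--Prodi--Serrin class $L^\rho_TL^q_x$ with $\frac2\rho+\frac dq=1$; together with $\bar u^n\in C([0,T];L^2)$, read off the Duhamel formula, this legitimizes the energy identity. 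The same Serrin bound on $\bar u$ is precisely what the weak-strong uniqueness invoked above requires.

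The difference estimate is then classical. Put $w=\bar u-\bar u^n$ and $Q=P-P^n$, so that
\[
\partial_t w-\Delta w+\bar u\cdot\nabla w+w\cdot\nabla\bar u^n+\nabla Q=0,\qquad\operatorname{div}w=0,\qquad w|_{t=0}=u_0-u_0^n .
\]
Taking the $L^2$ inner product with $w$ annihilates $\int(\bar u\cdot\nabla w)\cdot w\,dx$ and $\int\nabla Q\cdot w\,dx$ by the divergence-free condition, leaving
\[
\tfrac12\tfrac{d}{dt}\|w\|_{L^2}^2+\|\nabla w\|_{L^2}^2=-\int(w\cdot\nabla\bar u^n)\cdot w\,dx=\int(w\cdot\nabla w)\cdot\bar u^n\,dx ,
\]
and Hölder together with Sobolev/Gagliardo--Nirenberg bounds the last term by $\tfrac12\|\nabla w\|_{L^2}^2+g(t)\|w\|_{L^2}^2$ with $g\in L^1([0,T])$ built from the Serrin norm of $\bar u^n$. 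Since $\bar u^n\to\bar u$ in $E^p_T$, the quantity $\int_0^Tg\,dt$ is bounded uniformly in $n$, so Grönwall's inequality yields
\[
\|w\|_{L^\infty_T(L^2)}^2+\|w\|_{L^2_T(\dot H^1)}^2\le\|u_0-u_0^n\|_{L^2}^2\exp\!\Big(C\!\int_0^Tg\,dt\Big)\le C\,\|u_0-u_0^n\|_{L^2}^2 .
\]
As $u=\bar u$ and $u^n=\bar u^n$ on $[0,T]$, this is \eqref{L2}, and its right-hand side vanishes as $n\to\infty$, which is the asserted Hadamard well-posedness.

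I expect the main obstacle to be the reduction step, not the concluding Grönwall argument: one must verify, \emph{uniformly in $n$}, that the merely critical strong solution $\bar u^n$ belongs to a Serrin-type space and actually coincides with the global Leray solution on $[0,T]$. The uniformity is exactly where the lifespan continuity (Lemma \ref{gj}) and the Besov continuous dependence \eqref{cd} are used, and the $r=\infty$ case has to be run through the subspace $\bar B^{\frac dp-1}_{p,\infty}$ so that the interpolation and embeddings above remain valid.
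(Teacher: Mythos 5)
Your strategy would prove the estimate \eqref{L2}, but it is genuinely different from the paper's argument, so a comparison is worthwhile. The paper does \emph{not} run an energy/Gr\"onwall scheme: it writes the equation for $w=u^n-u^{\infty}$ in Duhamel form and applies the heat estimate of Lemma \ref{heat} at the level $s=0$, $p=r=2$, i.e.\ directly in $\widetilde L^{\infty}_T(\dot B^{0}_{2,2})\cap\widetilde L^{2}_T(\dot B^{1}_{2,2})=L^{\infty}_T(L^2)\cap L^{2}_T(\dot H^1)$. The bilinear terms $w\otimes u^n$ and $u^{\infty}\otimes w$ are then handled by a Bony decomposition, each piece being bounded by $\|u^n\|_{\widetilde L^{p_1'}_T(\dot B^{\frac dp+\frac{2}{p_1'}-1}_{p,\infty})}\bigl(\|w\|_{\widetilde L^{\infty}_T(\dot B^{0}_{2,2})}+\|w\|_{\widetilde L^{2}_T(\dot B^{1}_{2,2})}\bigr)$; interpolating the uniform bounds $(H_1)$--$(H_2)$ and taking $p_1'$ close to $2$ makes this prefactor of order $a$, so the nonlinearity is absorbed into the left-hand side in a single step, with no Gr\"onwall lemma. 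This buys two things your route does not: the product estimate lives entirely in Chemin--Lerner spaces, so no energy equality or Serrin-class bound for the critical strong solutions is needed for the difference estimate itself; and the constant $C$ in \eqref{L2} comes out universal, which is precisely what the remark following the theorem exploits. Your Gr\"onwall factor $\exp\bigl(C\int_0^T g\,dt\bigr)$ a priori depends on $E_0$ and $a$ through the Serrin norm of $\bar u^n$; it can in fact be made uniform (choose the Serrin pair with $2<\rho<4$ and use $aE_0\lesssim 1$, so that $\int_0^T g\,dt\lesssim (aE_0)^{\rho-2}a^{4-\rho}\lesssim 1$), but you would need to track this explicitly to recover the remark. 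Conversely, your reduction step --- identifying the Leray solutions with the strong ones on $[0,T]$ via weak--strong uniqueness, which requires exactly the Serrin membership and energy equality you describe --- is something the paper also needs but dispatches with the single sentence ``they are also the Leray weak solutions''; your proposal is more careful here, and the interpolation/embedding chain you sketch (including the two-regularity trick needed when $r>2$, since $\dot B^{0}_{q,r}\not\hookrightarrow L^q$ there) is the right way to make that identification rigorous, uniformly in $n$ thanks to Lemma \ref{gj} and \eqref{cd}. In short: your proof is correct but proceeds by the classical energy method, whereas the paper absorbs the nonlinearity by a paraproduct estimate in $\widetilde L^{2}_T(\dot B^{0}_{2,2})$ using the smallness of $a$; if you care about the universality of $C$, either quantify your Gr\"onwall exponent as above or switch to the paper's estimate for the final step.
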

\begin{rema}
Since the constant $C$ in \eqref{L2} is independent of the Besov norm of $u$, so the condition $u_0^n\rightarrow u_0 $ in $L^2\cap\dot{B}^{\frac{d}{p}-1}_{p,r}(\mathbb{R}^d)$ can be reduced to
$u_0^n\rightarrow u_0 $ in $L^2(\mathbb{R}^d)$ and there exists another $v_0$ such that $u_0^n\rightarrow v_0 $ in $\dot{B}^{\frac{d}{p}-1}_{p,r}(\mathbb{R}^d)$.
\end{rema}

\textbf{Notations: } Throughout the paper, we denote $\dot{B}^{s}_{p,r}(\mathbb{R}^d)=\dot{B}^{s}_{p,r}$, $\|u\|_{\dot{B}^{s}_{p,r}(\mathbb{R}^d)}+\|v\|_{\dot{B}^{s}_{p,r}(\mathbb{R}^d)}=\|u,v\|_{\dot{B}^{s}_{p,r}}$ and $C([0,T];\dot{B}^{s}_{p,r}(\mathbb{R}^d))=C_T(\dot{B}^{s}_{p,r})$, $L^p([0,T];\dot{B}^{s}_{p,r}(\mathbb{R}^d))=L^p_T(\dot{B}^{s}_{p,r})$.

\section{Preliminaries}
\par
In this section, we will recall some propositions and lemmas on the Littlewood-Paley decomposition and Besov spaces.

\begin{prop}\cite{book}
Let $\mathcal{C}$ be the annulus $\{\xi\in\mathbb{R}^d:\frac 3 4\leq|\xi|\leq\frac 8 3\}$. There exist radial functions $\chi$ and $\varphi$, valued in the interval $[0,1]$, belonging respectively to $\mathcal{D}(B(0,\frac 4 3))$ and $\mathcal{D}(\mathcal{C})$, and such that
$$ \forall\xi\in\mathbb{R}^d,\ \chi(\xi)+\sum_{j\geq 0}\varphi(2^{-j}\xi)=1, $$
$$ \forall\xi\in\mathbb{R}^d\backslash\{0\},\ \sum_{j\in\mathbb{Z}}\varphi(2^{-j}\xi)=1, $$
$$ |j-j'|\geq 2\Rightarrow\mathrm{Supp}\ \varphi(2^{-j}\cdot)\cap \mathrm{Supp}\ \varphi(2^{-j'}\cdot)=\emptyset, $$
$$ j\geq 1\Rightarrow\mathrm{Supp}\ \chi(\cdot)\cap \mathrm{Supp}\ \varphi(2^{-j}\cdot)=\emptyset. $$
The set $\widetilde{\mathcal{C}}=B(0,\frac 2 3)+\mathcal{C}$ is an annulus, and we have
$$ |j-j'|\geq 5\Rightarrow 2^{j}\mathcal{C}\cap 2^{j'}\widetilde{\mathcal{C}}=\emptyset. $$
Further, we have
$$ \forall\xi\in\mathbb{R}^d,\ \frac 1 2\leq\chi^2(\xi)+\sum_{j\geq 0}\varphi^2(2^{-j}\xi)\leq 1, $$
$$ \forall\xi\in\mathbb{R}^d\backslash\{0\},\ \frac 1 2\leq\sum_{j\in\mathbb{Z}}\varphi^2(2^{-j}\xi)\leq 1. $$
\end{prop}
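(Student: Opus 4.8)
The plan is to build everything from a single radial cutoff and then read off each assertion by elementary Fourier-support bookkeeping. First I would fix a smooth radial function $\chi\in\mathcal{D}(B(0,\frac43))$ taking values in $[0,1]$, identically equal to $1$ on $B(0,\frac34)$, and whose radial profile $\rho$ (with $\chi(\xi)=\rho(|\xi|)$) is non-increasing; such a $\chi$ is obtained by mollifying the indicator of a suitable interval. I would then \emph{define} $\varphi(\xi):=\chi(\xi/2)-\chi(\xi)$. Since $\chi$ is radial and smooth, so is $\varphi$; monotonicity of $\rho$ gives $\chi(\xi/2)\geq\chi(\xi)$, hence $\varphi\geq0$, while $\varphi\leq\chi(\xi/2)\leq1$, so $\varphi$ is $[0,1]$-valued. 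For the support: if $|\xi|>\frac83$ then both $\chi(\xi/2)$ and $\chi(\xi)$ vanish, and if $|\xi|<\frac34$ then both equal $1$; in either case $\varphi(\xi)=0$, so $\mathrm{Supp}\,\varphi\subset\mathcal{C}$ and $\varphi\in\mathcal{D}(\mathcal{C})$.

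Next I would establish the two partition-of-unity identities by telescoping. Summing the definition of $\varphi$ gives $\sum_{j=0}^{N}\varphi(2^{-j}\xi)=\chi(2^{-N-1}\xi)-\chi(\xi)$; letting $N\to\infty$ and using $\chi(0)=1$ together with continuity yields $\sum_{j\geq0}\varphi(2^{-j}\xi)=1-\chi(\xi)$, which is the first identity. For $\xi\neq0$ the two-sided telescoping $\sum_{j=-M}^{N}\varphi(2^{-j}\xi)=\chi(2^{-N-1}\xi)-\chi(2^{M}\xi)$ tends to $1-0=1$ as $M,N\to\infty$, since compact support of $\chi$ forces $\chi(2^{M}\xi)\to0$; this is the homogeneous identity.

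The disjointness statements are then pure radius arithmetic. Since $\mathrm{Supp}\,\varphi(2^{-j}\cdot)\subset2^{j}\mathcal{C}=\{\frac34 2^{j}\leq|\xi|\leq\frac83 2^{j}\}$, two such annuli with $j'\geq j+2$ are disjoint because $\frac34 2^{j+2}=3\cdot2^{j}>\frac83 2^{j}$; and for $j\geq1$ the ball $\mathrm{Supp}\,\chi\subset B(0,\frac43)$ is disjoint from $2^{j}\mathcal{C}$ since its inner radius satisfies $\frac34 2^{j}\geq\frac32>\frac43$. Computing the Minkowski sum gives $\widetilde{\mathcal{C}}=B(0,\frac23)+\mathcal{C}=\{\frac{1}{12}\leq|\xi|\leq\frac{10}{3}\}$, an annulus because $\frac34-\frac23=\frac{1}{12}>0$; comparing the radii of $2^{j}\mathcal{C}$ and $2^{j'}\widetilde{\mathcal{C}}$ under $|j-j'|\geq5$ yields the last separation, the only borderline being the shared boundary sphere at exactly $|j-j'|=5$ (where $\frac{1}{12}\cdot2^{5}=\frac83$), which is harmless.

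Finally, for the quadratic bounds I would exploit the overlap count forced by the disjointness above: at every $\xi$ at most two of the functions in $\{\chi\}\cup\{\varphi(2^{-j}\cdot)\}_{j\geq0}$ (respectively $\{\varphi(2^{-j}\cdot)\}_{j\in\mathbb{Z}}$) are nonzero, and they are consecutive. If $a,b\in[0,1]$ denote these (at most two) nonzero values, then $a+b=1$ by the partition identity, whence $a^2+b^2=1-2ab$ lies in $[\frac12,1]$ because $0\leq ab\leq\frac14$; this gives both $\frac12\leq\chi^2+\sum_{j\geq0}\varphi^2(2^{-j}\cdot)\leq1$ and its homogeneous analogue. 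The one genuinely non-mechanical ingredient is the construction of $\chi$ with a monotone radial profile, which is what forces $\varphi\geq0$ and pins its support to $\mathcal{C}$; keeping the radius bookkeeping consistent across the four separation claims is the only other place where care is required, and everything else reduces to telescoping and the elementary identity $a^2+b^2=1-2ab$.
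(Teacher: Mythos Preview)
The paper does not supply its own proof of this proposition: it is quoted verbatim from \cite{book} (Bahouri--Chemin--Danchin) as background, with no argument given. Your construction---defining $\varphi(\xi)=\chi(\xi/2)-\chi(\xi)$ from a monotone radial cutoff, telescoping for the partition identities, radius arithmetic for the support separations, and the two-term bound $a^2+b^2=1-2ab\in[\tfrac12,1]$ when $a+b=1$---is exactly the standard proof found in that reference, and it is correct. The only point worth a remark is the borderline sphere at $|j-j'|=5$ in the $\widetilde{\mathcal{C}}$ claim: whether the intersection is literally empty depends on whether $B(0,\tfrac23)$ is taken open or closed, but as you note this measure-zero overlap is irrelevant for every application (Fourier supports on a single sphere contribute nothing).
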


\begin{defi}\cite{book}
Denote $\mathcal{F}$ by the Fourier transform and $\mathcal{F}^{-1}$ by its inverse.
Let $u$ be a tempered distribution in $\mathcal{S}'(\mathbb{R}^d)$. For all $j\in\mathbb{Z}$, define
$$
\Delta_j u=0\,\ \text{if}\,\ j\leq -2,\quad
\Delta_{-1} u=\mathcal{F}^{-1}(\chi\mathcal{F}u),\quad
\Delta_j u=\mathcal{F}^{-1}(\varphi(2^{-j}\cdot)\mathcal{F}u)\,\ \text{if}\,\ j\geq 0,\quad
S_j u=\sum_{j'<j}\Delta_{j'}u.
$$
Then the Littlewood-Paley decomposition is given as follows:
$$ u=\sum_{j\in\mathbb{Z}}\Delta_j u \quad \text{in}\ \mathcal{S}'(\mathbb{R}^d). $$

Let $s\in\mathbb{R},\ 1\leq p,r\leq\infty.$ The nonhomogeneous Besov space $B^s_{p,r}(\mathbb{R}^d)$ is defined by
$$ B^s_{p,r}=B^s_{p,r}(\mathbb{R}^d)=\{u\in S'(\mathbb{R}^d):\|u\|_{B^s_{p,r}(\mathbb{R}^d)}=\Big\|(2^{js}\|\Delta_j u\|_{L^p})_j \Big\|_{l^r(\mathbb{Z})}<\infty\}.$$

Similarly, we can define the homogeneous Besov space
$$\dot{B}^{s}_{p,r}=\dot{B}^{s}_{p,r}(\mathbb{R}^d):=\{u\in S'_h(\mathbb{R}^d) | \|u\|_{\dot{B}^{s}_{p,r}}:=\|2^{sj}\|\dot{\Delta}_ju\|_{L^p(\mathbb{S}^d)}\|_{l^r}\leq\infty\},$$
where the Littlewood-Paley operator $\dot{\Delta}_j$ is defined by
$$\dot{\Delta}_j u=\mathcal{F}^{-1}(\varphi(2^{-j}\cdot)\mathcal{F}u)\,\ \text{if}\,\ j\in\mathbb{Z}.$$
\end{defi}


\begin{defi}\cite{book}
Let $s\in\mathbb{R},1\leq p,q,r\leq\infty$ and $T\in (0,\infty].$ The functional space $\widetilde{L}^q_T(\dot{B}^{s}_{p,r})$ is defined as the set of all the distributions $f(t)$ satisfying
$\|f\|_{\widetilde{L}^q_T(\dot{B}^{s}_{p,r})}:=\|(2^{ks}\|\dot{\Delta}_kf(t)\|_{L^q_TL^p})_k\|_{l^r}<\infty .$
\end{defi}
By Minkowski's inequality, it is easy to find that
$$\|f\|_{\widetilde{L}^q_T(\dot{B}^{s}_{p,r})}\leq \|f\|_{L^q_T(\dot{B}^{s}_{p,r})}\quad q\leq r;\quad\quad\quad \|f\|_{\widetilde{L}^q_T(\dot{B}^{s}_{p,r})}\geq \|f\|_{L^q_T(\dot{B}^{s}_{p,r})}\quad q\geq r.$$
\quad\\

Finally, we state some useful results on the heat equation
\begin{equation}\label{s1cuchong}
\left\{\begin{array}{l}
    u_t+\Delta u=G,\ x\in\mathbb{R}^d,\ t>0, \\
    u(0,x)=u_0(x).
\end{array}\right.
\end{equation}

\begin{lemm}\label{heat}\cite{book}
Let $s\in\mathbb{R}, 1\leq q,q_1,p,r\leq\infty$ with $q_1\leq q$. Assume $u_0$ in $\dot{B}^s_{p,r}$, and $G$ in $\widetilde{L}^{q_1}_T(\dot{B}^s_{p,r})$. Then (\ref{heat}) has a unique solution $u$ in $\widetilde{L}^{q}_T(\dot{B}^{s+\frac{2}{q}}_{p,r})$ satisfying
$$ \|u\|_{\widetilde{L}^{q}_T(\dot{B}^{s+\frac{2}{q}}_{p,r})}\leq C\Big(\|u_0\|_{\dot{B}^s_{p,r}}+\|G\|_{\widetilde{L}^{q_1}_T\dot{B}^{s+\frac{2}{q_1}-2}_{p,r}}\Big). $$
In particular, if $s=\frac{d}{p}-1$, we have
$$ \|u\|_{\widetilde{L}^{\infty}_T(\dot{B}^{\frac{d}{p}-1}_{p,r})\cap \widetilde{L}^{\frac{4}{3}}_T(\dot{B}^{\frac{d}{p}-\frac{1}{2}}_{p,r})\cap \widetilde{L}^{2}_T(\dot{B}^{\frac{d}{p}}_{p,r})\cap \widetilde{L}^{4}_T(\dot{B}^{\frac{d}{p}+\frac{1}{2}}_{p,r})\cap \widetilde{L}^{1}_T(\dot{B}^{\frac{d}{p}+1}_{p,r})}\leq C\Big(\|u_0\|_{\dot{B}^{\frac{d}{p}-1}_{p,r}}+\|G\|_{\widetilde{L}^{1}_T\dot{B}^{\frac{d}{p}-1}_{p,r}}\Big). $$
\end{lemm}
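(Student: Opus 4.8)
The plan is to reduce everything to a frequency-localized estimate for the heat semigroup and then reassemble via the $\ell^r$ summation. First I would localize: applying $\dot\Delta_j$ to \eqref{s1cuchong} and using that $\dot\Delta_j$ commutes with $\partial_t$ and $\Delta$, Duhamel's formula gives
$$\dot\Delta_j u(t)=e^{t\Delta}\dot\Delta_j u_0+\int_0^t e^{(t-\tau)\Delta}\dot\Delta_j G(\tau)\,d\tau.$$
The crucial ingredient is the smoothing property of the heat flow on dyadic blocks: there is a constant $c>0$ such that for every tempered distribution $f$ one has $\|e^{t\Delta}\dot\Delta_j f\|_{L^p}\leq Ce^{-c2^{2j}t}\|\dot\Delta_j f\|_{L^p}$, uniformly in $j\in\mathbb{Z}$ and $t\geq 0$. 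This follows by writing $e^{t\Delta}\dot\Delta_j f=g_{j,t}\ast f$ with $\widehat{g_{j,t}}(\xi)=e^{-t|\xi|^2}\varphi(2^{-j}\xi)$, rescaling $\xi=2^j\eta$, and estimating $\|g_{j,t}\|_{L^1}$ by the standard bound on the $L^1$-norm of the inverse Fourier transform of $e^{-s|\eta|^2}\varphi(\eta)$ (with $s=2^{2j}t$), which decays like $e^{-cs}$ because $\varphi$ is supported in an annulus bounded away from the origin.

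Inserting this bound into the Duhamel formula yields, pointwise in $t$,
$$\|\dot\Delta_j u(t)\|_{L^p}\leq Ce^{-c2^{2j}t}\|\dot\Delta_j u_0\|_{L^p}+C\int_0^t e^{-c2^{2j}(t-\tau)}\|\dot\Delta_j G(\tau)\|_{L^p}\,d\tau.$$
Taking the $L^q(0,T)$ norm in time, the first term contributes $\|e^{-c2^{2j}\cdot}\|_{L^q(0,T)}\lesssim 2^{-2j/q}$, and the second is a time convolution of $e^{-c2^{2j}t}\mathbf{1}_{\{t>0\}}$ with $\|\dot\Delta_j G\|_{L^p}$, so Young's inequality with exponents determined by $\tfrac1a=1+\tfrac1q-\tfrac1{q_1}$ (admissible precisely because $q_1\leq q$) bounds it by $\|e^{-c2^{2j}\cdot}\|_{L^a(0,T)}\|\dot\Delta_j G\|_{L^{q_1}_TL^p}\lesssim 2^{-2j(1+1/q-1/q_1)}\|\dot\Delta_j G\|_{L^{q_1}_TL^p}$. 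Multiplying through by $2^{j(s+2/q)}$ and then taking the $\ell^r(\mathbb{Z})$ norm gives exactly
$$\|u\|_{\widetilde{L}^q_T(\dot{B}^{s+\frac2q}_{p,r})}\leq C\Big(\|u_0\|_{\dot{B}^s_{p,r}}+\|G\|_{\widetilde{L}^{q_1}_T(\dot{B}^{s+\frac{2}{q_1}-2}_{p,r})}\Big),$$
which is the first assertion.

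For the displayed special case I would simply apply this estimate with $q_1=1$ (so that $s+\tfrac{2}{q_1}-2=s=\tfrac dp-1$) and $q\in\{\infty,4,2,\tfrac43,1\}$ in turn, for which $s+\tfrac2q$ runs through $\tfrac dp-1,\ \tfrac dp-\tfrac12,\ \tfrac dp,\ \tfrac dp+\tfrac12,\ \tfrac dp+1$, and sum the five resulting inequalities. Uniqueness is immediate: the difference $w$ of two solutions with the same data solves the dyadically localized linear problem $\partial_t\dot\Delta_j w-\Delta\dot\Delta_j w=0$ with $\dot\Delta_j w(0)=0$, hence $\dot\Delta_j w\equiv 0$ for every $j$ and $w\equiv 0$ in $\mathcal{S}'_h$. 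The only genuinely delicate point is the uniform-in-$j$ exponential smoothing estimate for $e^{t\Delta}\dot\Delta_j$; everything after that is Young's inequality and bookkeeping of exponents. Since the lemma is classical, I would either cite \cite{book} directly or include the kernel computation above for completeness.
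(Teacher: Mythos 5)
Your argument is correct and is precisely the standard proof of this lemma, which the paper does not reprove but simply cites from Bahouri--Chemin--Danchin: dyadic localization, Duhamel, the uniform smoothing bound $\|e^{t\Delta}\dot\Delta_j f\|_{L^p}\leq Ce^{-c2^{2j}t}\|\dot\Delta_j f\|_{L^p}$, Young's convolution inequality in time with $\tfrac1a=1+\tfrac1q-\tfrac1{q_1}$, and $\ell^r$ summation. The only cosmetic point is that to land on $\|\dot\Delta_j f\|_{L^p}$ (rather than $\|f\|_{L^p}$) in the smoothing estimate you should convolve with a kernel built from a fattened annulus cutoff $\tilde\varphi$ equal to $1$ on $\mathrm{supp}\,\varphi$; otherwise the exponent bookkeeping and the specialization to $q_1=1$, $q\in\{\infty,4,2,\tfrac43,1\}$ are exactly as in the reference.
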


\section{Local well-posedness}

We divide the proof of Theorem \ref{theorem} into 5 steps:
\quad\\
\textbf{Step 1: Constructing approximate solutions.}

Being different to the proof in \cite{ztjwdx1}, we will prove the local existence and uniqueness for (\ref{nssp0}) in a more precise way, which is necessary to prove the continuous dependence later. Now set $u_0\in \dot{B}^{\frac{d}{p}-1}_{p,r}(1\leq p<\infty)$ and define the first term $u^0:=e^{t\Delta}u_0$. Then we introduce a sequence $\{u^{n+1}\}$ with the initial data $u^n_0$ by solving the following heat equation:
\begin{equation}\label{sp1}
\left\{\begin{array}{lll}
u^{n+1}_t-\Delta u^{n+1}=\mathbb{P}(-u^{n}\nabla u^{n}),\\
divu^{n+1}=0,\\
u|_{t=0}=\dot{S}_nu_0,
\end{array}\right.
\end{equation}
where $\dot{S}_ng:=\sum_{k<n}\dot{\Delta}_k g$, it makes sense if $s<\frac{d}{p}$ or $s=\frac{d}{p},r=1$.
\quad\\
\textbf{Step 2: Uniform bounds.}
Taking advantage of Lemma \ref{heat}, we shall bound the approximating sequences in $E^p_T$. Now we claim that there exists some $T$ independent of $n$ such that the solutions $u^n$ satisfies the following inequalities:
\begin{align}\label{lsp2}
&(H_1):\quad \|u^{n}\|_{\widetilde{L}^{\infty}_T(B^{\frac{d}{p}-1}_{p,1})}\leq 2E_0,\notag\\
&(H_2):\quad \|u^{n}\|_{A}\leq 2a,\quad A:={\widetilde{L}^{2}_T(\dot{B}^{\frac{d}{p}}_{p,r})\cap \widetilde{L}^{1}_T(\dot{B}^{\frac{d}{p}+1}_{p,r})},
\end{align}
where $E_0:=\|u_0\|_{\dot{B}^{\frac{d}{p}-1}_{p,r}}$. \\
Now we suppose that $T$ and $a$ satisfy the following inequality:
\begin{align}\label{lsp2}
a\leq \min\{\frac{1}{16C^2_1E_0},(\frac{\sqrt{E_0}}{4C_1})^{\frac{2}{3}},\frac{1}{4C_1}\},
\end{align}
\begin{align}\label{lsp2.5}
\|e^{t\Delta}u_0\|_A\leq a,
\end{align}
where $C_1\geq C$, $C$ is the constant of Lemma \ref{heat}, we can choose $C_1$ much bigger if necessary.\\

It's easy to check that $(H_1)-(H_2)$ hold true for $n=0$. Now we show that if $(H_1)-(H_2)$ hold true for $n$, then they will hold true for $n+1$. In fact, combining (\ref{lsp2}) and Lemma \ref{heat}, we have
\begin{align}\label{lsp3}
&\|u^{n+1}\|_A \notag \\
&\leq \|e^{t\Delta}u_0\|_A+\|\mathbb{P}div(-u^{n}\otimes u^{n})\|_{\widetilde{L}^{1}_T(\dot{B}^{\frac{d}{p}-1}_{p,r})}\notag \\
&\leq a+C_1\|u^{n}\|_{\widetilde{L}^{4}_T(\dot{B}^{\frac{d}{p}-\frac{1}{2}}_{p,r})}\|u^{n}\|_{\widetilde{L}^{\frac{4}{3}}_T(\dot{B}^{\frac{d}{p}+\frac{1}{2}}_{p,r})}\notag \\
&\leq a+C_1\|u^{n}\|^{\frac{1}{2}}_{\widetilde{L}^{\infty}_T(\dot{B}^{\frac{d}{p}-1}_{p,r})}\|u^{n}\|^{\frac{1}{2}}_{\widetilde{L}^{2}_T(\dot{B}^{\frac{d}{p}}_{p,r})}
\|u^{n}\|^{\frac{1}{2}}_{\widetilde{L}^{2}_T(\dot{B}^{\frac{d}{p}}_{p,r})}\|u^{n}\|^{\frac{1}{2}}_{\widetilde{L}^{1}_T(\dot{B}^{\frac{d}{p}+1}_{p,r})}\notag \\
&\leq a+C_1(2E_0)^{\frac{1}{2}}(2a)^{\frac{3}{2}}\notag \\
&\leq 2a
\end{align}
and
\begin{align}\label{lsp4}
&\|u^{n+1}\|_{\widetilde{L}^{\infty}_T(\dot{B}^{\frac{d}{p}-1}_{p,r})} \notag \\
&\leq \|e^{t\Delta}u_0\|_{\widetilde{L}^{\infty}_T(\dot{B}^{\frac{d}{p}-1}_{p,r})}+\|\mathbb{P}div(-u^{n}\otimes u^{n})\|_{\widetilde{L}^{1}_T(\dot{B}^{\frac{d}{p}-1}_{p,r})}\notag \\
&\leq E_0+C_1(2E_0)^{\frac{1}{2}}(2a)^{\frac{3}{2}}\notag \\
&\leq 2E_0.
\end{align}
This implies $(H_1)-(H_2)$ hold true for $n+1$.

At the end of this step, we want to find the time $T$ and $a$ satisfying (\ref{lsp2}):\\
We discuss $\|u_0\|_{\dot{B}^{\frac{d}{p}-1}_{p,\infty}}$ in categories to meet the conditions $\uppercase\expandafter{\romannumeral1}$ and $\uppercase\expandafter{\romannumeral2}$ of (\ref{lsp2}):

(1) For $E_0\leq \frac{1}{16C_1}$, we let $a=2E_0$, which implies \eqref{lsp2}.\\
Then we have
$$\|e^{t\Delta}u_0\|_A\leq 2\|u_0\|_{\dot{B}^{\frac{d}{p}-1}_{p,r}}=2E_0=a.$$
That is (\ref{lsp2.5}) and we thus have
$$T=\infty.$$

(2) For $E_0\geq\frac{1}{16C_1}$, we let $a=\bar{c}:=\min\{\frac{1}{16C^2_1E_0},\frac{1}{4^{\frac{4}{3}}C_1}\}$, which implies \eqref{lsp2}. \\
Then let $T$ be small enough so that
$$\|e^{t\Delta}u_0\|_A\leq a.$$
In fact, since $u_0\in \dot{B}^{\frac{d}{p}-1}_{p,r}$, we let $j_0$ be an integer such that
\begin{align}\label{lsp5.5}
[\sum_{|j|\geq j_0}(\|\dot{\Delta}_ju_0\|_{L^p}2^{(\frac{d}{p}-1)j})^{r}]^{\frac{1}{r}}< \frac{a}{4}.
\end{align}

Then we have
\begin{align}\label{lsp6}
&\|e^{t\Delta}u_0\|_{\widetilde{L}^{1}_{T_0}\dot{B}^{\frac{d}{p}+1}_{p,r}}                                    \notag \\
&\leq [\sum_{j\in\mathbb{Z}}(2^{(\frac{d}{p}+1)j}\|\dot{\Delta}_ju_0\|_{L^p}\int_{0}^{T_0}e^{-2^{2^j}t}dt)^{r}]^{\frac{1}{r}}\notag \\
&\leq [\sum_{|j|\leq j_0}(2^{(\frac{d}{p}+1)j}\|\dot{\Delta}_ju_0\|_{L^p}T_0)^{r}]^{\frac{1}{r}}
+[\sum_{|j|> j_0}(2^{(\frac{d}{p}-1)j}(1-e^{2^{2j}T_0})\|\dot{\Delta}_ju_0\|_{L^p})^{r}]^{\frac{1}{r}}\notag \\
&\leq 2^{2j_0}{T_0}\|u_0\|_{\dot{B}^{\frac{d}{p}-1}_{p,r}}+ [\sum_{|j|> j_0}(2^{(\frac{d}{p}-1)j}\|\dot{\Delta}_ju_0\|_{L^p})^{r}]^{\frac{1}{r}}      \notag \\
&\leq \frac{1}{2} a,
\end{align}
and
\begin{align}\label{lsp7}
&\|e^{t\Delta}u_0\|_{L^{2}_{T_1}\dot{B}^{\frac{d}{p}}_{p,r}}                                    \notag \\
&\leq [\sum_{j\in\mathbb{Z}}(2^{\frac{d}{p}j}\|\dot{\Delta}_ju_0\|_{L^p}(\int_{0}^{T_1}e^{-2^{2^j}t}dt)^{\frac{1}{2}})^{r}]^{\frac{1}{r}}\notag \\
&\leq [\sum_{|j|\leq j_0}(2^{\frac{d}{p}j}T^{\frac{1}{2}}_1\|\dot{\Delta}_ju_0\|_{L^p})^{r}]^{\frac{1}{r}}
+[\sum_{|j|> j_0}(2^{(\frac{d}{p}-1)j}\|\dot{\Delta}_ju_0\|_{L^p}(1-e^{2^{2j}T_0})^{\frac{1}{2}})^{r}]^{\frac{1}{r}}\notag \\
&\leq 2^{j_0}T^{\frac{1}{2}}_1\|u_0\|_{\dot{B}^{\frac{d}{p}-1}_{p,r}}+ [\sum_{|j|> j_0}(2^{(\frac{d}{p}-1)j}\|\dot{\Delta}_ju_0\|_{L^p})^{r}]^{\frac{1}{r}}      \notag \\
&\leq \frac{1}{2} a,
\end{align}
where $T_0=\frac{a}{4}\frac{1}{2^{2j_0}E_0}$ and $T_1=\frac{a^2}{4^2}\frac{1}{2^{2j_0}E^2_0}$. Letting $T=\min\{T_0,T_1\}$, we get that
$\|e^{t\Delta}u_0\|_A\leq a ,$
which implies (\ref{lsp2}).

All in all, we can take
\begin{equation}\label{lsp8}
  T=\left\{\begin{array}{l}
    \infty,\quad\quad\quad \quad\quad \,\text{if } E_0\leq\frac{1}{16C_1},   \\
    \min\{T_0,T_1\},\quad \text{if } E_0\geq\frac{1}{16C_1},
  \end{array}\right.
\end{equation}
which satisfies (\ref{lsp2}).

Therefore we have
\begin{align}
&(H_1):\|u^{n+1}\|_{\widetilde{L}^{\infty}B^{\frac{d}{p}-1}_{p,r}}\leq 2E_0,\notag\\
&(H_2):\|u^{n+1}\|_{A}\leq 2a,\quad A:={\widetilde{L}^{2}_T(\dot{B}^{\frac{d}{p}}_{p,r})\cap \widetilde{L}^{1}_T(\dot{B}^{\frac{d}{p}+1}_{p,r})},\notag\\
\end{align}
where $a$ is a positive small quantity satisfying (\ref{lsp2}). This implies that the approximate sequence $(u^n,b^n)$ is uniformly bounded in $E^p_T$ in $[0,T]$.

\begin{rema}\label{j0}
By (\ref{lsp8}), we know that if $E_0\leq\frac{1}{16C_1}$, the local time $T$ depends only on $E_0$. However, if $E_0\geq\frac{1}{16C_1}$, the lifespan $T$
depends on both $E_0$ and the $j_0$ which satisfies (\ref{lsp5.5}). This means that the lifespan depends not only on the norm of the Littlewood-Paley decomposition of
the initial data $u_0$, but also on its Besov norm.

For example, set the large initial data $u_0$ such that
\begin{equation}
  \dot{\Delta}_ju_0=\left\{\begin{array}{l}
    C\phi,\quad when\quad j=0,   \\
    0,\quad\quad when\quad j\neq 0,
  \end{array}\right.
\end{equation}
and $v_0$ such that
\begin{equation}
  \dot{\Delta}_jv_0=\left\{\begin{array}{l}
    \phi,\quad when\quad j=N,   \\
    0,\quad when\quad j\neq N,
  \end{array}\right.
\end{equation}
where $C$ and $N$ are large integers such that $C=2^{N(\frac{d}{p}-1)}$ $(\forall N\in \mathbb{N})$, $\phi\in \mathbf{S}(\mathbb{R}^d)$ such that $C\|\phi\|_{L^p}>\frac{1}{16C_1}$.
Then using the argument for \eqref{lsp8} we have
$$\|u_0\|_{\dot{B}^{\frac{d}{p}-1}_{p,r}}=\|v_0\|_{\dot{B}^{\frac{d}{p}-1}_{p,r}}=C\|\phi\|_{L^p}>\frac{1}{16C_1}>\frac{a}{4}.$$

On the other hand, it's easy to see that $j_{u_0}=1$ and $j_{v_0}=N+1$ are the smallest integers such that
$$[\sum_{|j|\geq j_{u_0}}(\|\dot{\Delta}_ju_0\|_{L^p}2^{(\frac{d}{p}-1)j})^{r}]^{\frac{1}{r}}< \frac{a}{4},\quad [\sum_{|j|\geq j_{v_0}}(\|\dot{\Delta}_jv_0\|_{L^p}2^{(\frac{d}{p}-1)j})^{r}]^{\frac{1}{r}}< \frac{a}{4}.$$
Therefore, by \eqref{lsp8} we have
$$T_{u_0}=\min\{\frac{a}{4}\frac{1}{2^{2}E_0},\frac{a^2}{4^2}\frac{1}{2^{2}E^2_0}\}>T_{v_0}=\min\{\frac{a}{4}\frac{1}{2^{2(N+1)}E_0},\frac{a^2}{4^2}\frac{1}{2^{2(N+1)}E^2_0}\},$$
where $E_0=\|u_0\|_{\dot{B}^{\frac{d}{p}-1}_{p,r}}=\|v_0\|_{\dot{B}^{\frac{d}{p}-1}_{p,r}}.$
\end{rema}
\quad\\
\textbf{Step 3: Existence of a solution.}
\quad\\

Following the argument of \cite{book}, one can use the fixed point theorem to deal with the estimation of a solution. Setting $w^{n+1}=u^{n+1}-u^n$, then we have
\begin{equation}\label{lsp9}
\left\{\begin{array}{lll}
w^{n+1}_t-\Delta w^{n+1}=-\mathbb{P}div[w^{n}\otimes u^n+u^{n-1}\otimes w^{n}],\\
divw^{n+1}=0,\\
u|_{t=0}=0.
\end{array}\right.
\end{equation}
By Lemma \ref{heat}, we have
\begin{align}\label{lsp10}
\|w^{n+1}\|_{\widetilde{L}^{\frac{4}{3}}_T(\dot{B}^{\frac{d}{p}+\frac{1}{2}}_{p,r})}+\|w^{n+1}\|_{\widetilde{L}^{4}_T(\dot{B}^{\frac{d}{p}-\frac{1}{2}}_{p,r})}&\leq C(\|u^n\|_{\widetilde{L}^{4}_T(\dot{B}^{\frac{d}{p}-\frac{1}{2}}_{p,r})}\|w\|_{\widetilde{L}^{\frac{4}{3}}_T(\dot{B}^{\frac{d}{p}+\frac{1}{2}}_{p,r})}
+\|w\|_{\widetilde{L}^{4}_T(\dot{B}^{\frac{d}{p}-\frac{1}{2}}_{p,r})}\|u^{n-1}\|_{\widetilde{L}^{\frac{4}{3}}_T(\dot{B}^{\frac{d}{p}+\frac{1}{2}}_{p,r})})\notag\\
&\leq C_1(2E_0)^{\frac{1}{2}}(2a)^{\frac{1}{2}}\|w^n\|_{\widetilde{L}^{\frac{4}{3}}_T(\dot{B}^{\frac{d}{p}+\frac{1}{2}}_{p,r})}+2aC_1\|w^n\|_{\widetilde{L}^{4}_T(\dot{B}^{\frac{d}{p}-\frac{1}{2}}_{p,r})}\notag\\
&\leq \frac{1}{2}(\|w^n\|_{\widetilde{L}^{\frac{4}{3}}_T(\dot{B}^{\frac{d}{p}+\frac{1}{2}}_{p,r})}+\|w^n\|_{\widetilde{L}^{4}_T(\dot{B}^{\frac{d}{p}-\frac{1}{2}}_{p,r})}),
\end{align}
where $C_1(2E_0)^{\frac{1}{2}}(2a)^{\frac{1}{2}}\leq\frac{1}{2}$ and $2aC_1\leq \frac{1}{2}$ by \eqref{lsp2}. Thus we obtain a Cauchy sequence $\{w^n\}_{n\in\mathbb{N}}$ in $\widetilde{L}^{\frac{4}{3}}_T(\dot{B}^{\frac{d}{p}+\frac{1}{2}}_{p,r})\cap \widetilde{L}^{4}_T(\dot{B}^{\frac{d}{p}-\frac{1}{2}}_{p,r})$.

Combining the Fatou property for Besov spaces with $u^n$ being uniformly bounded in $E^p_T:= \widetilde{L}^{\infty}([0,T];\dot{B}^{\frac{d}{p}-1}_{p,r}(\mathbb{R}^d))\cap \widetilde{L}^1([0,T];\dot{B}^{\frac{d}{p}+1}_{p,r}(\mathbb{R}^d))$, we readily get
$u\in E^p_T.$

Finally, following the arguments of Theorem 2.94 and Theorem 3.19 in \cite{book}, it is a routine process to verify that $u$ satisfies the system (\ref{nssp0}). Moreover, we have $u\in C([0,T];\dot{B}^{\frac{d}{p}-1}_{p,r}(\mathbb{R}^d))$. In fact, for any $\epsilon>0$, Since $u_0\in \dot{B}^{\frac{d}{p}-1}_{p,r}$, we let $j_0\in\mathbb{N}$ such that $\sum_{|j|\geq j_0}[\|\dot{\delta}_ju_0\|2^{\frac{d}{p}-1}]^r\leq\frac{\epsilon}{2}$. Then there exists a $\delta:=\frac{\epsilon}{2\|u_0\|_{\dot{B}^{\frac{d}{p}-1}_{p,r}}^r2^{2rj_0}}$, for any $t\leq \delta$ we have
\begin{align}\label{lsp10}
\|e^{t\Delta}u_0-u_0\|^r_{\dot{B}^{\frac{d}{p}-1}_{p,r}}&\leq \sum_{j\in\mathbb{Z}}[(1-e^{-2^{2j}t})\|\dot{\delta}_ju_0\|2^{\frac{d}{p}-1}]^r \notag\\
&\leq \sum_{j\leq j_0}[(1-e^{-2^{2j}t})\|\dot{\delta}_ju_0\|2^{\frac{d}{p}-1}]^r+\sum_{|j|\geq j_0}[\|\dot{\delta}_ju_0\|2^{\frac{d}{p}-1}]^r\notag\\
&\leq 2^{2rj_0}t\|u_0\|_{\dot{B}^{\frac{d}{p}-1}_{p,r}}^r+\frac{\epsilon}{2}\notag\\
&\leq \frac{\epsilon}{2}+\frac{\epsilon}{2}=\epsilon.
\end{align}
Thus, we can easily deduce that $\lim_{t\rightarrow 0}\|u-u_0\|_{\dot{B}^{\frac{d}{p}-1}_{p,r}}=0.$
\quad\\
\textbf{Step 4: Uniqueness.}\\

To prove the uniqueness, we let $u,v\in E^p_T$ be two solutions of (\ref{nssp0}) with the same initial data. Let $w=u-v$. By the above argument of the fixed point theorem we easily get that
\begin{align}\label{lsp10}
\|w\|_{\widetilde{L}^{\frac{4}{3}}_T(\dot{B}^{\frac{d}{p}+\frac{1}{2}}_{p,r})}+\|w\|_{\widetilde{L}^{4}_T(\dot{B}^{\frac{d}{p}-\frac{1}{2}}_{p,r})}\leq \frac{1}{2}\|w\|_{\widetilde{L}^{\frac{4}{3}}_T(\dot{B}^{\frac{d}{p}+\frac{1}{2}}_{p,r})}+\frac{1}{2}\|w\|_{\widetilde{L}^{4}_T(\dot{B}^{\frac{d}{p}-\frac{1}{2}}_{p,r})}.
\end{align}
This implies that $u=v$, which completes the proof of uniqueness.

\textbf{Step 5: Continuous dependence.}\\

let $T$ be a lifespan corresponding to the initial data $u_0$ by (\ref{lsp8}). Before proving the continuous dependence of (\ref{nssp0}), we first need to prove that if $u^n_0$ tends to $u_0$ in $\dot{B}^{\frac{d}{p}-1}_{p,r}$, then there exists a lifespan $T^n$ corresponding to $u^n_0$ such that $T^n\rightarrow T$. This implies a common lifespan both for $u^n$ and $u$ when $n$ is sufficient large. Here is the key lemma:


\begin{lemm}\label{gj}
Let $u_0\in \dot{B}^{\frac{d}{p}-1}_{p,r}$ be the initial data of (\ref{nssp0}) with $1\leq r,p<\infty$. If there exist other initial data $u^n_0\in \dot{B}^{\frac{d}{p}-1}_{p,r}$ such that $\|u^n_0-u_0\|_{\dot{B}^{\frac{d}{p}-1}_{p,r}}\rightarrow 0\quad (n\rightarrow\infty)$, then we can construct a lifespan $T^n$ corresponding to $u^n_0$ such that
$$T^n\rightarrow T,\quad\quad n\rightarrow\infty ,$$
where the lifespan $T$ corresponds to $u_0$.
\end{lemm}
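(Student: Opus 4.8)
The plan is to reduce everything to the explicit characterization of the lifespan given in \eqref{lsp8}. Recall that the lifespan depends on two quantities: the Besov norm $E_0 = \|u_0\|_{\dot B^{d/p-1}_{p,r}}$, and (when $E_0 \geq \frac{1}{16C_1}$) an integer $j_0 = j_0(u_0)$ chosen so that the high/low-frequency tail $\big[\sum_{|j|\geq j_0}(\|\dot\Delta_j u_0\|_{L^p}2^{(d/p-1)j})^r\big]^{1/r}$ is smaller than $a/4$. The strategy is: first fix, once and for all, a value $a = \bar c(u_0)$ and an integer $j_0$ adapted to $u_0$ (slightly strengthening the tail bound to, say, $< a/8$ to leave room); then show that for $n$ large, the \emph{same} pair $(a, j_0)$ — or at worst $(a, j_0+1)$ — works for $u^n_0$, because $u^n_0 \to u_0$ in $\dot B^{d/p-1}_{p,r}$ forces the tails of $u^n_0$ to be controlled by the tails of $u_0$ plus $\|u^n_0 - u_0\|_{\dot B^{d/p-1}_{p,r}}$, which is eventually negligible. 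Once $(a,j_0)$ is common, the formulas $T_0 = \frac{a}{4}\frac{1}{2^{2j_0}E_0}$ and $T_1 = \frac{a^2}{16}\frac{1}{2^{2j_0}E_0^2}$ depend continuously on $E_0 = \|u_0\|$, and $E_0^n := \|u^n_0\| \to E_0$, so $T^n = \min\{T_0^n, T_1^n\} \to \min\{T_0, T_1\} = T$.

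In more detail, I would proceed in the following steps. \textbf{Step 1 (the small-data case).} If $E_0 < \frac{1}{16C_1}$, then for $n$ large $E_0^n < \frac{1}{16C_1}$ as well, so both give $T = T^n = \infty$; in particular $T^n \to T$ trivially. So assume $E_0 \geq \frac{1}{16C_1}$ (the borderline case $E_0 = \frac{1}{16C_1}$ can be absorbed into either branch by an arbitrarily small perturbation of the threshold, or handled by taking $a = \bar c$ uniformly). \textbf{Step 2 (choice of $a$).} Set $a = \bar c := \min\{\frac{1}{32 C_1^2 E_0}, \frac{1}{4^{4/3}C_1}\}$, which is $\leq \bar c(u_0)$ from \eqref{lsp2}; since $E_0^n \to E_0$, for $n$ large we have $\min\{\frac{1}{16C_1^2 E_0^n}, \frac{1}{4^{4/3}C_1}\} \geq a$, so this same $a$ is an admissible choice for every $u^n_0$ with $n$ large. \textbf{Step 3 (common $j_0$).} Pick $j_0$ minimal with $\big[\sum_{|j|\geq j_0}(\|\dot\Delta_j u_0\|_{L^p}2^{(d/p-1)j})^r\big]^{1/r} < \tfrac{a}{8}$. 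By the triangle inequality in $\ell^r$,
\begin{align*}
\Big[\sum_{|j|\geq j_0}(\|\dot\Delta_j u^n_0\|_{L^p}2^{(d/p-1)j})^r\Big]^{1/r}
\leq \Big[\sum_{|j|\geq j_0}(\|\dot\Delta_j u_0\|_{L^p}2^{(d/p-1)j})^r\Big]^{1/r} + \|u^n_0 - u_0\|_{\dot B^{d/p-1}_{p,r}} < \tfrac{a}{8} + \tfrac{a}{8} = \tfrac{a}{4}
\end{align*}
for $n$ large enough that $\|u^n_0 - u_0\|_{\dot B^{d/p-1}_{p,r}} < a/8$. Hence $j_0$ is a valid (not necessarily minimal) choice of the tail-cutoff index for $u^n_0$, and we may define $T^n := \min\{T_0^n, T_1^n\}$ with $T_0^n = \tfrac{a}{4}\tfrac{1}{2^{2j_0}E_0^n}$, $T_1^n = \tfrac{a^2}{16}\tfrac{1}{2^{2j_0}(E_0^n)^2}$; by the uniform-bound argument of Step 2 in the excerpt, $T^n$ is indeed a lifespan for the solution with data $u^n_0$ (the hypotheses \eqref{lsp2}, \eqref{lsp2.5} hold with the common $a$ and $j_0$). \textbf{Step 4 (passing to the limit).} Since $E_0^n \to E_0 > 0$ and $a, j_0$ are fixed, the explicit formulas give $T_0^n \to T_0$ and $T_1^n \to T_1$, whence $T^n = \min\{T_0^n, T_1^n\} \to \min\{T_0, T_1\}$. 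Finally $\min\{T_0, T_1\}$ is exactly a valid lifespan $T$ for $u_0$ in the sense of \eqref{lsp8} with this particular choice of $a$ and $j_0$, so $T^n \to T$.

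The main subtlety — and the only place where one must be slightly careful — is that the lifespan in \eqref{lsp8} is not canonically defined: it depends on the free choices of $a$ and of the cutoff index $j_0$. The statement of the lemma should therefore be read as: \emph{there is a choice of lifespans $T$ for $u_0$ and $T^n$ for $u^n_0$ with $T^n \to T$}, and the proof above produces exactly such a compatible family by freezing $(a, j_0)$ adapted to the limit $u_0$ and checking these remain admissible for $u^n_0$ once $n$ is large. The convergence $\|\dot\Delta_j u^n_0\|_{L^p} \to \|\dot\Delta_j u_0\|_{L^p}$ for each fixed $j$ (immediate from $\ell^r$-convergence) combined with the uniform tail bound from Step 3 is what makes the cutoff index stable; no compactness or diagonal argument is needed. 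I expect Step 3 to be the heart of the matter, since it is where the topology of $\dot B^{d/p-1}_{p,r}$ enters, and the rest is continuity of elementary expressions in $E_0$.
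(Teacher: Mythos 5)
Your proof is correct, and its engine is the same as the paper's: the $\ell^r$ triangle inequality bounding the frequency tail of $u_0^n$ beyond a cutoff by the tail of $u_0$ plus $\|u_0^n-u_0\|_{\dot B^{d/p-1}_{p,r}}$ (this is exactly the paper's inequality \eqref{jianqie}). Where you diverge is in how the cutoff is stabilized. The paper takes $j_0$ to be the \emph{minimal} integer with tail $<a/4$ for $u_0$, builds a nonincreasing family of cutoffs $\bar j_0^m$ from the strengthened thresholds $a/4-\epsilon/m$, shows by a monotone-sequence-plus-contradiction argument that $\bar j_0^m$ eventually equals $j_0$, and then assembles $j_0^n$ by a diagonal selection over the ranges $N_{\epsilon/m}\le n<N_{\epsilon/(m+1)}$; the payoff is that $T^n$ converges to the specific lifespan $T$ of \eqref{lsp8} built from that minimal cutoff. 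You instead freeze a single cutoff from the strengthened threshold $a/8$ (together with a single, slightly smaller, common $a$) and check it is admissible for all large $n$; this is shorter and avoids the diagonal construction, but your limiting value is the lifespan attached to that possibly larger cutoff, which can be strictly smaller than the paper's $T$. Since \eqref{lsp8} does not single out a canonical lifespan and the lemma is only used downstream to produce a common positive existence time $T-\delta$ for $u$ and all $u^n$ with $n$ large, your version proves everything that is actually needed; you also treat the borderline case $E_0=\tfrac{1}{16C_1}$ (where the paper's dichotomy between $T=\infty$ and $T<\infty$ is genuinely ambiguous) more carefully than the paper does.
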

\begin{proof}
By virtue of Remark \ref{j0}, since $T=\infty$ when $E_0\leq\frac{1}{16C_1}$, we only consider the large initial data. Thus, we need to prove $T^n\rightarrow T$ when $E_0>\frac{1}{16C_1}$. For convenience, since $T=\min\{T_0,T_1\}$, we write down the definitions of $T_0$ and $T_1$:
$$T_0=\frac{a}{4}\frac{1}{2^{2j_0}E_0},\quad T_1=\frac{a^2}{4^2}\frac{1}{2^{2j_0}E^2_0},$$
where $j_0$ is a fixed integer such that:
$$[\sum_{|j|\geq j_0}(\|\dot{\Delta}_j u_0\|_{L^p}2^{(\frac{d}{p}-1)j})^r]^{\frac{1}{r}}< \frac{a}{4}.$$
As $u_0\in \dot{B}^{\frac{d}{p}-1}_{p,r},$ we can suppose that $j_0$ is the smallest integer such that the above inequality holds true. Since $E^n_0\rightarrow E_0$, in order to prove $T^n_0\rightarrow T_0$ and $T^n_1\rightarrow T_1$, it suffices to show that there exists a corresponding sequence $j^n_0$ satisfying
$$[\sum_{|j|\geq j^n_0}(\|\dot{\Delta}_j u^n_0\|_{L^p}2^{(\frac{d}{p}-1)j})^r]^{\frac{1}{r}}< \frac{a}{4},$$
and $j^n_0\rightarrow j_0$.

Now we begin to construct a subsequence $\{\bar{j}^m_0\}_{m\in\mathbb{N}}\geq j_0$. Fix a positive constant $\epsilon=\frac{a}{8}<\frac{a}{4}$.\\

For this $\epsilon$, there exists $N_{\epsilon}$ such that when $n\geq N_{\epsilon}$, we have
$$\|u^n_0-u_0\|_{\dot{B}^{\frac{d}{p}-1}_{p,r}}\leq \epsilon .$$
Then we let $j^{\epsilon}_0$ be the smallest integer that $$[\sum_{|j|\geq j^{\epsilon}_0}(\|\dot{\Delta}_j u_0\|_{L^p}2^{(\frac{d}{p}-1)j})^r]^{\frac{1}{r}}< \frac{a}{4}-\epsilon .$$
By the definition of $j_0$, we have $j_0\leq j^{\epsilon}_0$.


Replacing $\epsilon$ by $\frac{\epsilon}{m}$ ($m\in\mathbb{N}^+$), there exists $N_{\frac{\epsilon}{m}}$ such that when $n\geq N_{\frac{\epsilon}{m}}$, we have
$$\|u^n_0-u_0\|_{\dot{B}^{\frac{d}{p}-1}_{p,r}}\leq \frac{\epsilon}{m}.$$
Then we define that $j^{\frac{\epsilon}{m}}_0$ be the smallest integer that $$[\sum_{|j|\geq j^{\frac{\epsilon}{m}}_0}(\|\dot{\Delta}_j u_0\|_{L^2}2^{(\frac{d}{2}-1)j})^r]^{\frac{1}{r}}
< \frac{a}{4}-\frac{\epsilon}{m}.$$
Since $\frac{a}{4}-\frac{\epsilon}{m}>\frac{a}{4}-\frac{\epsilon}{m-1}$, it follows that $$j_0\leq j^{\frac{\epsilon}{m}}_0\leq j^{\frac{\epsilon}{m-1}}_0 .$$.

Now letting $\bar{j}^m_0:=j^{\frac{\epsilon}{m}}_0$, we deduce that when $n\geq N_{\frac{\epsilon}{m}}$,
\begin{align}\label{jianqie}
[\sum_{|j|\geq\bar{j}^m_0}(\|\dot{\Delta}_j u^n_0\|_{L^p}2^{(\frac{d}{p}-1)j})^r]^{\frac{1}{r}}\leq \|u^n_0-u_0\|_{\dot{B}^{\frac{d}{p}-1}_{p,r}}+[\sum_{|j|>\bar{j}^m_0}(\|\dot{\Delta}_j u_0\|_{L^p}2^{(\frac{d}{p}-1)j})^r]^{\frac{1}{r}}
<\frac{\epsilon}{m}+\frac{a}{4}-\frac{\epsilon}{m}=\frac{a}{4}.
\end{align}
Since $\{\bar{j}^m_0\}$ is a monotone and bounded sequence, we deduce that $\bar{j}^m_0\rightarrow \bar{j}_0$ $(m\rightarrow\infty)$ for some integer $\bar{j}_0\geq j_0$. For $0<\bar{\epsilon}<1$ there exists $N$ such that when $m\geq N$ we have
$$|\bar{j}^m_0-\bar{j}_0|\leq\bar{\epsilon}<1.$$
Noting that $\bar{j}^m_0,\bar{j}_0\in\mathbb{N}$, we deduce that $\bar{j}^{m}_0=\bar{j}_0$ when $m\geq N$ and $\bar{j}_0$ is the smallest integer that
$$[\sum_{|j|\geq\bar{j}_0}(\|\dot{\Delta}_ju_0\|_{L^p}2^{(\frac{d}{p}-1)j})^r]^{\frac{1}{r}}< \frac{a}{4}-\frac{\epsilon}{m}.$$
We claim that $\bar{j}_0=j_0$. Otherwise, if $\bar{j}_0>j_0$, we deduce from the above inequality that
$$[\sum_{|j|\geq j_0}(\|\dot{\Delta}_ju_0\|_{L^p}2^{(\frac{d}{p}-1)j})^r]^{\frac{1}{r}}\geq \frac{a}{4}-\frac{\epsilon}{m},\quad \forall m\geq N.$$
Since the left hand-side of the above inequality is independent of $m$, we have
$$[\sum_{|j|\geq j_0}(\|\dot{\Delta}_ju_0\|_{L^p}2^{(\frac{d}{p}-1)j})^r]^{\frac{1}{r}}\geq \frac{a}{4},\quad\forall m\geq N.$$
This contradicts the definition of $j_0$. So we have $\bar{j}^m_0\rightarrow \bar{j}_0=j_0$ $(m\rightarrow\infty)$.

Finally, we construct a sequence $j^n_0$ by $\bar{j}^m_0$ when $n\geq N_{\epsilon}$:
\begin{equation}\label{equ1-1-5}
  j^n_0:=\left\{\begin{array}{l}
    \bar{j}^1_0, \quad N_{\epsilon}\leq n<N_{\frac{\epsilon}{2}},\\
    \bar{j}^2_0, \quad N_{\frac{\epsilon}{2}}\leq n< N_{\frac{\epsilon}{3}},\\
    ... ...\\
    \bar{j}^m_0, \quad N_{\frac{\epsilon}{m}}\leq n< N_{\frac{\epsilon}{m+1}},\\
    ... ...\\
  \end{array}\right.
\end{equation}
By virtue of \eqref{jianqie}, one can check that
$$[\sum_{|j|\geq j^n_0}(\|\dot{\Delta}_ju^n_0\|_{L^p}2^{(\frac{d}{p}-1)j})^r]^{\frac{1}{r}}< \frac{a}{4}.$$
Using the monotone bounded theorem, on can prove that $j^n_0\rightarrow j_0$ $(n\rightarrow\infty)$. Therefore, we have
$$T^n_0\rightarrow T_0,\quad T^n_1\rightarrow T_1\quad\Longrightarrow T^n\rightarrow T,\quad n\rightarrow\infty .$$
This completes the proof of the lemma.
\end{proof}
\begin{rema}
By Lemma \ref{gj}, letting $T^{\infty}$ be the lifespan of $u^{\infty}$, then we can find a $T^n$ corresponding to $u^n$ such that $T^n\rightarrow T^{\infty}$ when $n\rightarrow\infty$. That is to say, for fixed some small $\delta >0$, there exists an integer $N_{\delta}$ such that when $n\geq N$, we have
$$|T^n-T^{\infty}|< \delta .$$

Thus, we can choose $T=T^{\infty}-\delta$, which is also the common lifespan both for $u^{\infty}$ and $u^{n}$, but independent of $n$.
\end{rema}
Now we begin to prove the continuous dependence.
\begin{theo}
Let $p<\infty$, $n\in\mathbb{N}\cap\{\infty\}$. Assume that $u^n$ is the solution to the system (\ref{nssp0}) with the initial data $u^n_0$. If $u^n_0$ tends to $u^{\infty}_0$ in $\dot{B}^{\frac{d}{p}-1}_{p,r}$, then there exists a positive ${T}$ independent of $n$ such that $u^n$ tends to $u^{\infty}$ in $\widetilde{L}_{T}(\dot{B}^{\frac{d}{p}-1}_{p,r})\cap \widetilde{L}^{1}_{T}(\dot{B}^{\frac{d}{p}+1}_{p,r})$.
\end{theo}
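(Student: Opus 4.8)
The plan is to transfer the continuous dependence from the ``difference equation'' by exploiting the uniform bounds from Steps 1--2 together with the common lifespan produced by Lemma \ref{gj}. First, by Lemma \ref{gj} and the subsequent remark, there is a single $T>0$ and an index $N$ such that for all $n\geq N$ the solutions $u^n$ and $u^\infty$ both exist on $[0,T]$ and satisfy the uniform bounds $(H_1)$--$(H_2)$, i.e.\ $\|u^n\|_{\widetilde L^\infty_T(\dot B^{d/p-1}_{p,r})}\leq 2E_0$ and $\|u^n\|_{A}\leq 2a$ with $a$ obeying \eqref{lsp2}; the same holds for $u^\infty$. Set $w^n:=u^n-u^\infty$. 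Then $w^n$ solves the linear heat equation
\begin{equation}\label{cd-diff}
\left\{\begin{array}{l}
w^n_t-\Delta w^n=-\mathbb{P}\,\mathrm{div}\,\big(w^n\otimes u^n+u^\infty\otimes w^n\big),\\[1mm]
\mathrm{div}\,w^n=0,\\[1mm]
w^n|_{t=0}=u^n_0-u^\infty_0.
\end{array}\right.
\end{equation}

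Next I would apply Lemma \ref{heat} to \eqref{cd-diff} exactly as in the contraction estimate \eqref{lsp10} of Step 3, but now keeping the (nonzero) initial datum. This gives, with $C_1$ the constant of Lemma \ref{heat},
\begin{align}\label{cd-est}
\|w^n\|_{\widetilde L^{4/3}_T(\dot B^{d/p+1/2}_{p,r})}+\|w^n\|_{\widetilde L^{4}_T(\dot B^{d/p-1/2}_{p,r})}
&\leq C_1\|u^n_0-u^\infty_0\|_{\dot B^{d/p-1}_{p,r}}\notag\\
&\quad+C_1\big(\|u^n\|_{\widetilde L^4_T(\dot B^{d/p-1/2}_{p,r})}+\|u^\infty\|_{\widetilde L^4_T(\dot B^{d/p-1/2}_{p,r})}\big)\|w^n\|_{\widetilde L^{4/3}_T(\dot B^{d/p+1/2}_{p,r})}\notag\\
&\quad+C_1\|u^\infty\|_{\widetilde L^{4/3}_T(\dot B^{d/p+1/2}_{p,r})}\|w^n\|_{\widetilde L^{4}_T(\dot B^{d/p-1/2}_{p,r})}.
\end{align}
By the interpolation $\|v\|_{\widetilde L^4_T(\dot B^{d/p-1/2}_{p,r})}\leq\|v\|^{1/2}_{\widetilde L^\infty_T(\dot B^{d/p-1}_{p,r})}\|v\|^{1/2}_{\widetilde L^2_T(\dot B^{d/p}_{p,r})}$ and similarly for the $\widetilde L^{4/3}$ norm, the uniform bounds $(H_1)$--$(H_2)$ force the two prefactors to be $\leq C_1(2E_0)^{1/2}(2a)^{1/2}$ and $\leq 2aC_1$, which are each $\leq\frac12$ by \eqref{lsp2}. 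Absorbing these terms on the left yields
\begin{equation}\label{cd-abs}
\|w^n\|_{\widetilde L^{4/3}_T(\dot B^{d/p+1/2}_{p,r})}+\|w^n\|_{\widetilde L^{4}_T(\dot B^{d/p-1/2}_{p,r})}\leq 2C_1\|u^n_0-u^\infty_0\|_{\dot B^{d/p-1}_{p,r}}.
\end{equation}
Finally, feeding \eqref{cd-abs} back into Lemma \ref{heat} in the endpoint form (the ``in particular'' estimate) controls the bilinear right-hand side of \eqref{cd-diff} in $\widetilde L^1_T(\dot B^{d/p-1}_{p,r})$ by $C(\|u^n\|_A+\|u^\infty\|_A)\cdot\big(\|w^n\|_{\widetilde L^{4/3}_T(\dot B^{d/p+1/2}_{p,r})}+\|w^n\|_{\widetilde L^{4}_T(\dot B^{d/p-1/2}_{p,r})}\big)$, hence
\begin{equation}\label{cd-final}
\|w^n\|_{\widetilde L^\infty_T(\dot B^{d/p-1}_{p,r})}+\|w^n\|_{\widetilde L^1_T(\dot B^{d/p+1}_{p,r})}\leq C_{u_0}\|u^n_0-u^\infty_0\|_{\dot B^{d/p-1}_{p,r}}\to 0,
\end{equation}
which is precisely the asserted convergence (and establishes \eqref{cd}).

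The main obstacle is not the linear estimate itself but securing its hypotheses uniformly in $n$: one must know that a \emph{single} time $T$ works for all large $n$ and that the uniform bounds $(H_1)$--$(H_2)$ hold on that common interval. This is exactly what Lemma \ref{gj} (via the lifespan formula \eqref{lsp8} and the convergence $j^n_0\to j_0$, $E^n_0\to E_0$) is designed to supply, so the argument hinges on invoking it correctly; once $T$ is fixed and the bounds are in place, the rest is the bootstrap \eqref{cd-est}--\eqref{cd-final}. A secondary point requiring a little care is the case $r=\infty$ (for $u_0\in\bar B^{d/p-1}_{p,\infty}$), where the $j_0$-truncation argument in \eqref{lsp5.5} must be replaced by the defining decay property of $\bar B^{d/p-1}_{p,\infty}$; the structure of the estimate is unchanged.
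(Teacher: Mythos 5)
Your proposal is correct and follows essentially the same route as the paper: a common lifespan and uniform bounds from Lemma \ref{gj} and Step 2, the difference equation for $u^n-u^\infty$, and the heat estimate of Lemma \ref{heat} with the bilinear term absorbed via the smallness condition \eqref{lsp2}. The only cosmetic difference is that you absorb first in the $\widetilde L^{4/3}$--$\widetilde L^4$ norms and then bootstrap to the endpoint norms, whereas the paper estimates all the norms in one pass; the content is identical.
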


\begin{proof}
By Lemma \ref{gj}, we can easily find that $T=T^{\infty}-\delta$ is the common lifespan for both $u^{n}$ and $u^{\infty}$ for $n$ large enough. Since $u^n_0$ tends to $u^{\infty}_0$ in $\dot{B}^{\frac{d}{p}-1}_{p,r}$, by the argument of Step 2, we can easily get that
$$\|u^n\|_{\widetilde{L}^{\infty}_{T}(\dot{B}^{\frac{d}{p}-1}_{p,r})}\leq 2E_0,\quad\|u^n\|_{\widetilde{L}^{p}_{T}(\dot{B}^{\frac{d}{p}}_{p,r})\cap \widetilde{L}^{1}_{T}(\dot{B}^{\frac{d}{p}+1}_{p,r})}\leq 2a,\quad \forall n,$$
where $a$ is a positive small quantity satisfying (\ref{lsp2}).
Similar to the proof of the uniqueness, we let
\begin{equation}\label{lsp9}
\left\{\begin{array}{lll}
(u^n-u^{\infty})_t-\Delta (u^n-u^{\infty})=-\mathbb{P}div[(u^n-u^{\infty})\otimes u^n+u^{\infty}\otimes (u^n-u^{\infty})],\\
div(u^n-u^{\infty})=0,\\
(u^n-u^{\infty})|_{t=0}=(u^n_0-u^{\infty}_0).
\end{array}\right.
\end{equation}
By Lemma \ref{heat}, we have
\begin{align}\label{lsp10}
&\|u^n-u^{\infty}\|_{\widetilde{L}^{\infty}_T(\dot{B}^{\frac{d}{p}-1}_{p,r})\cap \widetilde{L}^{4}_T(\dot{B}^{\frac{d}{p}-\frac{1}{2}}_{p,r})\cap \widetilde{L}^{2}_T(\dot{B}^{\frac{d}{p}}_{p,r})\cap \widetilde{L}^{\frac{4}{3}}_T(\dot{B}^{\frac{d}{p}+\frac{1}{2}}_{p,r})\cap \widetilde{L}^{1}_T(\dot{B}^{\frac{d}{p}+1}_{p,r})}\notag\\
&\leq \|u^n_0-u^{\infty}_0\|_{\dot{B}^{\frac{d}{p}-1}_{p,r}}+
C_1(\|u\|_{\widetilde{L}^{4}_T(\dot{B}^{\frac{d}{p}-\frac{1}{2}}_{p,r})}\|u^n-u^{\infty}\|_{\widetilde{L}^{\frac{4}{3}}_T(\dot{B}^{\frac{d}{p}+\frac{1}{2}}_{p,r})}
+\|u^n-u^{\infty}\|_{\widetilde{L}^{4}_T(\dot{B}^{\frac{d}{p}-\frac{1}{2}}_{p,r})}\|v\|_{\widetilde{L}^{\frac{4}{3}}_T(\dot{B}^{\frac{d}{p}+\frac{1}{2}}_{p,r})})\notag\\
&\leq \|u^n_0-u^{\infty}_0\|_{\dot{B}^{\frac{d}{p}-1}_{p,r}}+C_1(2E_0)^{\frac{1}{2}}(2a)^{\frac{1}{2}}\|u^n-u^{\infty}\|_{\widetilde{L}^{\frac{4}{3}}_T(\dot{B}^{\frac{d}{p}+\frac{1}{2}}_{p,r})}
+2aC_1\|u^n-u^{\infty}\|_{\widetilde{L}^{4}_T(\dot{B}^{\frac{d}{p}-\frac{1}{2}}_{p,r})}\notag\\
&\leq \|u^n_0-u^{\infty}_0\|_{\dot{B}^{\frac{d}{p}-1}_{p,r}}+\frac{1}{2}(\|u^n-u^{\infty}\|_{\widetilde{L}^{\frac{4}{3}}_T(\dot{B}^{\frac{d}{p}+\frac{1}{2}}_{p,r})}
+\frac{1}{2}\|u^n-u^{\infty}\|_{\widetilde{L}^{4}_T(\dot{B}^{\frac{d}{p}-\frac{1}{2}}_{p,r})}),
\end{align}
where $C_1(2E_0)^{\frac{1}{2}}(2a)^{\frac{1}{2}}\leq\frac{1}{2}$ and $2aC_1\leq \frac{1}{2}$ are based on \eqref{lsp2}. Then we have
\begin{align}\label{lsp11}
\|u^n-u^{\infty}\|_{L^{\infty}_T\dot{B}^{\frac{d}{p}-1}_{p,r}\cap L^{1}_T\dot{B}^{\frac{d}{p}+1}_{p,r}}
\leq C\|u^n_0-u^{\infty}_0\|_{\dot{B}^{\frac{d}{p}-1}_{p,r}}\rightarrow 0,\quad n\rightarrow \infty.
\end{align}
This completes the proof of continuous dependence.

Finally, combining Steps 1-2, we obtain the local well-posedness of the (NS) equation. This completes the proof of Theorem \ref{theorem}.

\end{proof}

Similarly, we give the proof of Theorem \ref{theorem2}.\\
\textbf{The proof of Theorem \ref{theorem2}:}
\begin{proof}
For $u_0\in\bar{B}^{\frac{d}{p}-1}_{p,\infty}$, By the definition of $\bar{B}^{\frac{d}{p}-1}_{p,\infty}$, there exists $j_0\in\mathbb{N}$ such that
$$\lim_{|j|\geq j_0}2^{\frac{d}{p}-1}\|\dot{\Delta}_jf\|_{L^p}\leq\frac{1}{4} a.$$
Then letting $T_0=\frac{a}{4}\frac{1}{2^{2j_0}E_0}$ and $T_1=\frac{a^2}{4^2}\frac{1}{2^{2j_0}E^2_0}$, we have
\begin{align}
&\|e^{t\Delta}u_0\|_{\widetilde{L}^{1}_{T_0}\dot{B}^{\frac{d}{p}+1}_{p,\infty}}                                    \notag \\
&\leq \sup_{j\in\mathbb{Z}}2^{(\frac{d}{p}+1)j}\|\dot{\Delta}_ju_0\|_{L^p}\int_{0}^{T_0}e^{-2^{2^j}t}dt\notag \\
&\leq \sup_{|j|\leq j_0}2^{(\frac{d}{p}+1)j}\|\dot{\Delta}_ju_0\|_{L^p}T_0
+\sup_{|j|> j_0}2^{(\frac{d}{p}-1)j}(1-e^{2^{2j}T_0})\|\dot{\Delta}_ju_0\|_{L^p}\notag \\
&\leq 2^{2j_0}{T_0}\|u_0\|_{\dot{B}^{\frac{d}{p}-1}_{p,r}}+ \sup_{|j|> j_0}2^{(\frac{d}{p}-1)j}\|\dot{\Delta}_ju_0\|_{L^p}      \notag \\
&\leq \frac{1}{2} a,
\end{align}
and
\begin{align}\label{lsp7}
&\|e^{t\Delta}u_0\|_{L^{2}_{T_1}\dot{B}^{\frac{d}{p}}_{p,\infty}}                                    \notag \\
&\leq \sup_{j\in\mathbb{Z}}(2^{\frac{d}{p}j}\|\dot{\Delta}_ju_0\|_{L^p}(\int_{0}^{T_1}e^{-2^{2^j}t}dt)^{\frac{1}{2}}\notag \\
&\leq \sup_{|j|\leq j_0}2^{\frac{d}{p}j}T^{\frac{1}{2}}_1\|\dot{\Delta}_ju_0\|_{L^p}
+\sup_{|j|> j_0}2^{(\frac{d}{p}-1)j}\|\dot{\Delta}_ju_0\|_{L^p}(1-e^{2^{2j}T_0})^{\frac{1}{2}}\notag \\
&\leq 2^{j_0}T^{\frac{1}{2}}_1\|u_0\|_{\dot{B}^{\frac{d}{p}-1}_{p,r}}+ \sup_{|j|> j_0}2^{(\frac{d}{p}-1)j}\|\dot{\Delta}_ju_0\|_{L^p}      \notag \\
&\leq \frac{1}{2} a.
\end{align}
Setting $T=\min\{T_0,T_1\}$, the proof is then similar to that of Theorem \ref{theorem}. So we can prove that the Cauchy problem (\ref{nssp0}) is locally well-posed in the Hadamard sense.
\end{proof}

Finally, we consider the Leray weak solutions with extra conditions on the initial data.\\
\textbf{The proof of Theorem \ref{theorem3}:}
\begin{proof}
Set $u_0:=u^{\infty}_0$. Applying Theorems 1-2, we can obtain some local well-posed solutions $u^n$ ($n\in\mathbb{N}\cap\{\infty\}$) in $E^p_T$ (or $\dot{E}_T$) for a common lifespan $T=T_{u_0}$ independent of $n$. They are also the Leray weak solutions. Moreover, for any $n\in\mathbb{N}\cap\{\infty\}$ we have
\begin{align}\label{L2lsp2}
&\quad \|u^{n}\|_{\widetilde{L}^{\infty}_T(B^{\frac{d}{p}-1}_{p,1})}\leq 2E_0,\notag\\
&\quad \|u^{n}\|_{A}\leq 2a,\quad A:={\widetilde{L}^{2}_T(\dot{B}^{\frac{d}{p}}_{p,r})\cap \widetilde{L}^{1}_T(\dot{B}^{\frac{d}{p}+1}_{p,r})},
\end{align}
and
\begin{align}\label{L2lsp3}
a\leq \min\{\frac{1}{16C^2_1E_0},(\frac{\sqrt{E_0}}{4C_1})^{\frac{2}{3}},\frac{1}{4C_1}\}\leq \frac{1}{4C_1},
\end{align}
where $E_0:=\|u_0\|_{\dot{B}^{\frac{d}{p}-1}_{p,r}}$, $C_1\geq C$, $C$ is the constant of Lemma \ref{heat}. In fact, we can choose $C_1$ much bigger such as $C_1=48C$.

Setting $w=u^n-u^{\infty}$, then we have
\begin{equation}\label{L2lsp9}
\left\{\begin{array}{lll}
w_t-\Delta w=-\mathbb{P}div[w\otimes u^n+u^{\infty}\otimes w],\\
divw=0,\\
w|_{t=0}=u^n_0-u_0.
\end{array}\right.
\end{equation}
By Lemma \ref{heat}, we have
\begin{align}\label{L2lsp10}
\|w\|_{\widetilde{L}^{\infty}_T(\dot{B}^{0}_{2,2})}+\|w\|_{\widetilde{L}^{2}_T(\dot{B}^{1}_{2,2})}
\leq C(\|u^n_0-u_0\|_{L^2}+\|w\otimes u^n\|_{\widetilde{L}^{2}_T(\dot{B}^{0}_{2,2})}+\|u^{\infty}\otimes w\|_{\widetilde{L}^{2}_T(\dot{B}^{0}_{2,2})}).
\end{align}
Applying the Bony decomposition, we have:\\
(1) Setting $\frac{1}{2}=\frac{1}{p_1}+\frac{1}{p'_1}$ with $2<p_1,p'_1<\infty$, then
\begin{align}\label{L2lsp11}
\|T_{u^n}w\|_{\widetilde{L}^{2}_T(\dot{B}^{0}_{2,2})}&\leq \|u^n\|_{\widetilde{L}^{p'_1}_T(\dot{B}^{\frac{2}{p'_1}-1}_{\infty,\infty})}\|w\|_{\widetilde{L}^{p_1}_T(\dot{B}^{1-\frac{2}{p'_1}}_{2,2})}\notag\\
&\leq \|u^n\|_{\widetilde{L}^{p'_1}_T(\dot{B}^{\frac{2}{p'_1}-1+\frac{d}{p}}_{p,\infty})}\|w\|_{\widetilde{L}^{p_1}_T(\dot{B}^{\frac{2}{p_1}}_{2,2})}\notag\\
&\leq \|u^n\|_{\widetilde{L}^{p'_1}_T(\dot{B}^{\frac{2}{p'_1}-1+\frac{d}{p}}_{p,\infty})}(\|w\|_{\widetilde{L}^{\infty}_T(\dot{B}^{0}_{2,2})}+\|w\|_{\widetilde{L}^{2}_T(\dot{B}^{1}_{2,2})}).
\end{align}
\quad\\
(2) Setting $\frac{1}{2}=\frac{1}{p_2}+\frac{1}{p'_2}$ with $2<p_2,p'_2<\infty$ and $p\leq p_2< \frac{d}{2}p_1<\infty$, then
\begin{align}\label{L2lsp12}
\|T_{w}u^n\|_{\widetilde{L}^{2}_T(\dot{B}^{0}_{2,2})}&\leq\|u^n\|_{\widetilde{L}^{p'_1}_T(\dot{B}^{\frac{d}{p_2}-\frac{2}{p_1}}_{p_2,\infty})}\|w\|_{\widetilde{L}^{p_1}_T(\dot{B}^{\frac{2}{p_1}-\frac{d}{p_2}}_{p'_2,2})}\notag\\
&= \|u^n\|_{\widetilde{L}^{p'_1}_T(\dot{B}^{\frac{d}{p_2}+\frac{2}{p'_1}-1}_{p_2,\infty})}\|w\|_{\widetilde{L}^{p_1}_T(\dot{B}^{\frac{2}{p_1}}_{2,2})}\notag\\
&\leq \|u^n\|_{\widetilde{L}^{p'_1}_T(\dot{B}^{\frac{d}{p}+\frac{2}{p'_1}-1}_{p,\infty})}(\|w\|_{\widetilde{L}^{\infty}_T(\dot{B}^{0}_{2,2})}+\|w\|_{\widetilde{L}^{2}_T(\dot{B}^{1}_{2,2})}).
\end{align}
\quad\\
(3) Setting $\frac{1}{p_4}=\frac{1}{2}+\frac{1}{p_3}$ with $p\leq p_3<\infty$ and $p_4<2$, then
\begin{align}\label{L2lsp13}
\|R(w,u^n)\|_{\widetilde{L}^{2}_T(\dot{B}^{0}_{2,2})}&\leq \|R(w,u^n)\|_{\widetilde{L}^{2}_T(\dot{B}^{\frac{d}{p_3}}_{p_4,2})}\notag\\
&\leq \|u^n\|_{\widetilde{L}^{p'_1}_T(\dot{B}^{\frac{d}{p_3}-\frac{2}{p_1}}_{p_3,\infty})}\|w\|_{\widetilde{L}^{p_1}_T(\dot{B}^{\frac{2}{p_1}}_{2,2})}\notag\\
&\leq \|u^n\|_{\widetilde{L}^{p'_1}_T(\dot{B}^{\frac{d}{p}-\frac{2}{p_1}}_{p,\infty})}\|w\|_{\widetilde{L}^{p_1}_T(\dot{B}^{\frac{2}{p_1}}_{2,2})}\notag\\
&\leq \|u^n\|_{\widetilde{L}^{p'_1}_T(\dot{B}^{\frac{d}{p}+\frac{2}{p'_1}-1}_{p,\infty})}(\|w\|_{\widetilde{L}^{\infty}_T(\dot{B}^{0}_{2,2})}+\|w\|_{\widetilde{L}^{2}_T(\dot{B}^{1}_{2,2})}).
\end{align}
Combining interpolation inequality with \eqref{L2lsp2}, we deduce that
$$\|u^n\|_{\widetilde{L}^{p'_1}_T(\dot{B}^{\frac{d}{p}+\frac{2}{p'_1}-1}_{p,\infty})}\leq \|u^n\|^{\frac{2}{p'_1}}_{\widetilde{L}^{2}_T(\dot{B}^{\frac{d}{p}}_{p,\infty})}\|u^n\|^{1-\frac{2}{p'_1}}_{\widetilde{L}^{\infty}_T(\dot{B}^{\frac{d}{p}+1}_{p,\infty})}\leq (2a)^{\frac{2}{p'_1}}2E_0^{1-\frac{2}{p'_1}}.$$
Set $f(z)=(2a)^{\frac{2}{z}}2E_0^{1-\frac{2}{z}}$. Obviously, it is a continuous function. It's easy to deduce that $\lim_{z\rightarrow 2}f(z)=2a$. So for any given $1\leq p<\infty$, we let $p\leq p_2< \frac{d}{2}p_1<\infty$ and $p_1$ sufficiently large such that $p'_1$ is sufficiently close to $2$. Then we have $f(p'_1)=(2a)^{\frac{2}{p'_1}}2E_0^{1-\frac{2}{p'_1}}\leq 2\cdot 2a$ for this $p'_1$.
Combining \eqref{L2lsp11}, \eqref{L2lsp12} and \eqref{L2lsp13} yields that
\begin{align}\label{L2lsp14}
\|w\otimes u^n\|_{\widetilde{L}^{2}_T(\dot{B}^{0}_{2,2})}&\leq 3\|u^n\|_{\widetilde{L}^{p'_1}_T(\dot{B}^{\frac{d}{p}+\frac{2}{p'_1}-1}_{p,\infty})}(\|w\|_{\widetilde{L}^{\infty}_T(\dot{B}^{0}_{2,2})}+\|w\|_{\widetilde{L}^{2}_T(\dot{B}^{1}_{2,2})})\notag\\
&\leq 12a(\|w\|_{\widetilde{L}^{\infty}_T(\dot{B}^{0}_{2,2})}+\|w\|_{\widetilde{L}^{2}_T(\dot{B}^{1}_{2,2})}).
\end{align}
Similarly for $\|u^{\infty}\otimes w\|_{\widetilde{L}^{2}_T(\dot{B}^{0}_{2,2})}$, thus we have
\begin{align}
\|w\|_{\widetilde{L}^{\infty}_T(\dot{B}^{0}_{2,2})}+\|w\|_{\widetilde{L}^{2}_T(\dot{B}^{1}_{2,2})}
&\leq C(\|u^n_0-u_0\|_{L^2}+\|w\otimes u^n\|_{\widetilde{L}^{2}_T(\dot{B}^{0}_{2,2})}+\|u^{\infty}\otimes w\|_{\widetilde{L}^{2}_T(\dot{B}^{0}_{2,2})})\notag\\
&\leq C\|u^n_0-u_0\|_{L^2}+24aC(\|w\|_{\widetilde{L}^{\infty}_T(\dot{B}^{0}_{2,2})}+\|w\|_{\widetilde{L}^{2}_T(\dot{B}^{1}_{2,2})})\notag\\
&\leq C\|u^n_0-u_0\|_{L^2}+\frac{1}{8}(\|w\|_{\widetilde{L}^{\infty}_T(\dot{B}^{0}_{2,2})}+\|w\|_{\widetilde{L}^{2}_T(\dot{B}^{1}_{2,2})}).
\end{align}
Since $\widetilde{L}^{\infty}_T(\dot{B}^{0}_{2,2})\hookrightarrow {L}^{\infty}_T(L^2)$, we finally obtain \eqref{L2}.
\end{proof}
\quad\\

\noindent\textbf{Acknowledgements.} This work was partially supported by NNSFC [grant numbers 11671407 and 11701586],  FDCT (grant number 0091/2018/A3), Guangdong Special Support Program (grant number 8-2015), and the key project of NSF of  Guangdong province (grant number 2016A030311004).
\phantomsection
\addcontentsline{toc}{section}{\refname}


\end{document}